\newcommand{\assign}{:=}
\newcommand{\cdummy}{\cdot}
\newcommand{\mathLaplace}{\Delta}
\newcommand{\mathd}{\mathrm{d}}
\newcommand{\tmem}[1]{{\em #1\/}}
\newcommand{\tmop}[1]{\ensuremath{\operatorname{#1}}}
\newenvironment{enumerateroman}{\begin{enumerate}[i.] }{\end{enumerate}}
\newenvironment{proof}{\noindent\textbf{Proof.\ }}{\hspace*{\fill}$\Box$\medskip}
\newenvironment{proof*}[1]{\noindent\textbf{#1\ }}{\hspace*{\fill}$\Box$\medskip}
\newtheorem{theorem}{Theorem}
\newtheorem{definition}[theorem]{Definition}
\newtheorem{corollary}[theorem]{Corollary}
\newtheorem{lemma}[theorem]{Lemma}
{\theorembodyfont{\rmfamily}\newtheorem{remark}[theorem]{Remark}}
\begin{document}
\title{Energy solutions to SDEs with supercritical distributional drift: An extension and weak convergence rates}
\author{
  Lukas Gräfner\\
  University of Warwick\\
  \texttt{lukas.grafner@warwick.ac.uk}
}

\maketitle

\begin{abstract}\noindent
  In this work we consider the SDE
  \begin{equation}
   \mathd X_t = b (t, X_t) \mathd t + \sqrt{2} \mathd B_t, \label{mainSDE}
  \end{equation}
  in dimension $d \geqslant 2$, where $B$ is a Brownian motion and $b : \mathbb{R}_+ \rightarrow
  \mathcal{S}' (\mathbb{R}^d ; \mathbb{R}^d)$ is distributional, scaling
  super-critical and satisfies $\nabla \cdummy b \equiv 0$. We partially
  extend the super-critical weak well-posedness result for energy solutions
  from {\cite{GraefnerPerkowski24}} by allowing a mixture of the regularity
  regimes treated therein: Outside of neighbourhoods of a small (and compared to
  {\cite{GraefnerPerkowski24}} ``time-dependent'') local singularity set $K \subset \mathbb{R}_+
  \times \mathbb{R}^d$, $b$ is assumed to be in a certain supercritical $L^q_T H^{s, p}$-type class that allows a direct link between the PDE and the energy solution from a-priori estimates up
  to the stopping time of visiting $K$. To establish this correspondence, and
  thus uniqueness, globally in time we then show that $K$ is actually never
  visited which requires us to impose a relation between the dimension of $K$
  and the H{\"o}lder regularity of $X$.\\
  In the second part of this work we derive weak convergence rates for approximations of equation
  (\ref{mainSDE}) in the case of time-independent drift, in particular with local singularities as above.
  
  \bigskip		
		\noindent{{\sc Keywords:} Singular SDEs;   regularization by noise; weak well-posedness; distributional drift; supercritical equations; energy solutions; local singularities; weak rate}
  
\end{abstract}
\section{Introduction}

We assume that
\begin{equation}
  b (t)^i = \mathbf{b} (A (t))^i := \nabla \cdummy A (t)_i, \label{matrixdif}
\end{equation}
with $A :
\mathbb{R}_+ \times \mathbb{R}^d \rightarrow \mathbb{R}^{d \times d}$ an
antisymmetric matrix field (in particular $\nabla \cdummy b \equiv 0$) such that
\begin{equation}
  A \in L^q_{\tmop{loc}} L^p  \text{for some $p \left( \frac{1}{2} -
  \frac{1}{q} \right) > 1$, where $p \in [2, \infty)$ and $q \in [2, \infty]$}
  . \label{excond}
\end{equation}
Under this condition, which lies far within the
{\tmem{scaling-supercritical}} regime, existence of {\tmem{energy solutions}}
(see Definition \ref{defenergysolutionsEuclid}) is shown in
{\cite{GraefnerPerkowski24}} for any $\tmop{Leb}$- absolutely continuous
initial distribution. The authors then proceed to show two super-critical weak
well-posedness results for energy solutions under further assumptions. The
first result ({\cite[Theorem 1]{GraefnerPerkowski24}}) corresponds to
putting assumptions on the $L^q_T H^{s, p}$ regularity of $A$ while the second
one ({\cite[Theorem 4]{GraefnerPerkowski24}}) includes the case of
time-independent $A$ which is in $L^{\infty}$ away from a compact
$\tmop{Leb}_{\mathbb{R}^d}$-zero set on which $A$ is allowed to diverge in a
certain way. Let us also mention the recent and related work {\cite{HaoZhang23}} on SDEs
with supercritical distributional drift, where the authors consider a similar regime as the one of {\cite[Theorem 1]{GraefnerPerkowski24}}. They moreover reference {\cite{Takanobu85}} which inspired the present work: In the latter work the author
considers a case where $A$ is in $C^2$ away from a singularity at the origin.
They then use a stopping argument to solve the SDE classically away from the
singularity and show that the origin is almost surely not visited.\\
The purpose of this work is to adapt this idea in the case of ``globally distributional'', supercritical $b$ where classical notions of solution are not applicable. In this way, in Theorem \ref{maintheorem}, we partially extend the well-posedness results from {\cite{GraefnerPerkowski24}}
(in classes of H{\"o}lder-continuous solutions) by combining the two regimes
from {\cite[Theorem 1]{GraefnerPerkowski24}} and {\cite[Theorem
4]{GraefnerPerkowski24}}. In particular, compared to
{\cite{GraefnerPerkowski24}} we can now allow time-dependent singularities for
$A$ and our arguments for the proof of uniqueness are much more probabilistic
in nature. A key tool is Lemma \ref{stoppedob} which makes sense of the stopped object
\begin{equation*}
u(t\wedge\tau,X_{t\wedge\tau}) , 
\end{equation*}
for a certain stopping time $\tau$ and $u,X$ of very low regularity. We refer to Remark \ref{relationtoGP24} for a detailed discussion of Theorem \ref{maintheorem}.\\
In the second part of this work, we derive weak rates for the convergence of approximations to equation (\ref{mainSDE}): In Theorem \ref{ratewithoutlocalsingularities} we consider the setting of time-independent drift without local singularities and general initial densities, getting arbitrarily close to rate $1$ (see Remark \ref{rateremark}). In Corollary \ref{ratesingularitycorollary} we allow local singularities and consider bounded initial densities.
\paragraph{Plan of this work}
In Section \ref{Preliminaries} we specify the setting and state and discuss Theorem \ref{maintheorem}. Section \ref{maintheoremsection} contains the proof of Theorem \ref{maintheorem}. In Section \ref{ratessection} we derive weak convergence rates through Theorem \ref{ratewithoutlocalsingularities} and Corollary \ref{ratesingularitycorollary}.
\paragraph{Notation}
For numbers $a,b$ we write $a\lesssim b$ if $a\leqslant C b$ for some constant $C>0$ which is irrelevant for the discussion. To emphasize the dependence of $C$ on some parameter $T$, we write $a\lesssim_T b$. For equality up to such a constant we write $a\simeq b$. We denote by $\tmop{Leb}=\tmop{Leb}_{\mathbb{R}^d}$ the Lebesgue measure on $\mathbb{R}^d$.

\section{Preliminaries and main result}\label{Preliminaries}

For the sake of a slightly shorter write-up we mostly work with a finite
time-horizon $[0, T]$ but all results translate to $[0, \infty)$ in a
canonical way. For the same reason we restrict ourselves to the case that $b$ is divergence-free, although it would be possible to add non-divergence-free function-valued perturbations of critical regularity as in \cite{GraefnerPerkowski24}. We define
\[ \mathcal{A} = L^{\infty}_T L^{\infty} (\tmop{Skew}_d) + L^2_T H^1
   (\tmop{Skew}_d), \]
where $\tmop{Skew}_d$ is the space of antisymmetric elements of $\mathbb{R}^{d
\times d}$. It follows from Remark 2 in {\cite{GraefnerPerkowski24}} that
$L^r_T B^{- \gamma + 1}_{r, 1} (\tmop{Skew}_d) \subset \mathcal{A},$ for any $r
\in [2, \infty], \gamma \in [0, 1]$ such that $r \geqslant \frac{2}{1 -
\gamma}$. Here, $B^{- \gamma + 1}_{r, 1}$ is the inhomogeneous Besov space with spatial regularity $- \gamma + 1$, integrability $r$ and Besov parameter $1$. The threshold for scaling criticality is $r = \frac{d}{1 - \gamma}$
(see e.g. the introductions of {\cite{HaoZhang23}},
{\cite{GraefnerPerkowski24}}) so that $\mathcal{A}$ already corresponds to
super-critical regularity.\\
Now, we turn to our formalization of local singularities, on which $A$ is allowed to diverge away from the condition of being in $\mathcal{A}$. Let
\[ K \subset [0, T] \times \mathbb{R}^d, \]
be compact. We write $B_{\varepsilon} (K) = \{ (t, x) \in \mathbb{R} \times
\mathbb{R}^d : \inf_{(s, y) \in K} | (t, x) - (s, y) | < \varepsilon \}$. For $\varepsilon>0$ let
$g_\varepsilon \in C^\infty(\mathbb{R}\times\mathbb{R}^d)$  be such that
\begin{eqnarray*}
g_\varepsilon|_{B_{\varepsilon/2}(K)^c} \equiv 1,g_\varepsilon|_{B_{\varepsilon/4}(K)}\equiv 0 .
\end{eqnarray*}
This sequence exists by Lemma 33 in {\cite{GraefnerPerkowski24}}. We also write $B_{\varepsilon} (K)_t = \{ x \in \mathbb{R}^d : (t,
x) \in B_{\varepsilon} (K) \}$. Apart from (\ref{excond}) we put the following
standing assumption:
\begin{equation}
  \lim_{\varepsilon \rightarrow 0} \sup_{t \in [0, T]}
  \tmop{Leb}_{\mathbb{R}^d} (B_{\varepsilon} (K)_t) = 0, A g_{\varepsilon} \in
  \mathcal{A} \forall \varepsilon > 0 . \label{locallydiv}
\end{equation}
For a continuous stochastic process $X:[0,T]\rightarrow \mathbb{R}^d$ on some probability space we define the $\sigma (X)$-stopping
time
\[ \tau^{\infty} = \tau^{\infty} (X) \assign \inf \{ t \in [0, T] : (t, X_t)
   \in K \} , \]
   with the usual convention that $\inf \emptyset=\infty$.\\
The following definition of {\tmem{energy solutions}} is given in
{\cite{GraefnerPerkowski24}}.

\begin{definition}
  \label{defenergysolutionsEuclid}Let $b : \mathbb{R}_+ \rightarrow
  \mathcal{S}' (\mathbb{R}^d, \mathbb{R}^d)$ be such that for all $T > 0$ and
  $f \in C^{\infty}_c ([0, T] \times \mathbb{R}^d)$ we have $b\cdummy\nabla
  f \in L^2_T H^{- 1}$. Let $X$ be a stochastic process on a complete
  probability space $(\Omega, \mathcal{F}, \mathbb{P})$ with values in $C
  (\mathbb{R}_+ ; \mathbb{R}^d)$ such that for all $T > 0$:
  \begin{enumerateroman}
    \item \label{incompressibleEucild}$X$ is {\tmem{incompressible in
    probability}}, i.e. for all $\varepsilon > 0$ and $T > 0$ there exists $M
    = M (\varepsilon, T) > 0$ such that for all $B \in \mathcal{B}
    (\mathbb{R}^d)$
    \[ \mathbb{P} (X_t \in B) \leqslant \varepsilon + M \cdot \tmop{Leb} (B),
       \qquad t \in [0, T], \]
    and
    \[ \mathbb{P} \left( \sup_{t \leqslant T} \left| \int_0^t f (s, X_s)
       \mathd s \right| > \delta \right) \leqslant \varepsilon +
       \frac{M}{\delta} \| f \|_{L^1_T L^1 (\mathbb{R}^d)}, \]
    for all for all $f \in C^{\infty}_c ([0, T] \times \mathbb{R}^d)$.
    
    \item \label{admissibleEucild}X is satisfies an {\tmem{energy estimate in probability}}, i.e.
    for all $\varepsilon > 0$ and $T > 0$ there exists $M = M (\varepsilon, T)
    > 0$ such that for all $f \in C^{\infty}_c ([0, T] \times \mathbb{R}^d)$
    \begin{equation}
      \mathbb{P} \left( \sup_{t \leqslant T} \left| \int_0^t f (s, X_s) \mathd
      s \right| > \delta \right) \leqslant \varepsilon + \frac{M}{\delta} \| f
      \|_{L^2_T H^{- 1} (\mathbb{R}^d)} . \label{Itotrickbound}
    \end{equation}
    \item \label{martingalepropertyEuclid}For any $f \in C^{\infty}_c ([0, T]
    \times \mathbb{R}^d)$, the process
    \[ M^f_t = f (t, X_t) - f (0, X_0) - \int_0^t (\partial_s + \mathLaplace +
       b \cdummy \nabla) f (s, X_s) \mathd s, \qquad t \in [0, T], \]
    is a local martingale in the filtration generated by $X$,
    where the integral is defined as
    \[ \int_0^t (\partial_s + \mathLaplace + b \cdummy \nabla) f (s, X_s)
       \mathd s \assign I ((\partial_s + \mathLaplace + b \cdummy \nabla)
       f)_t, \]
    with $I$ the unique continuous extension from $C^{\infty}_c ([0, T] \times
    \mathbb{R}^d)$ to $L^2_T H^{- 1} (\mathbb{R}^d)$ of the map $g \mapsto
    \int_0^{\cdummy} g (s, X_s) \mathd s$ taking values in the continuous
    adapted stochastic processes equipped with the topology of uniform
    convergence on compacts (ucp-topology), and the extension $I$ exists by
    (\ref{Itotrickbound}).
    
    \item \label{quadraticvariationEuclid}The local martingale $M^f$ from
    \ref{martingalepropertyEuclid} has quadratic variation
    \[ \langle M^f \rangle_t = \int_0^t | \nabla f (s, X_s) |^2 \mathd s . \]
  \end{enumerateroman}
  Then $X$ is called an {\tmem{energy solution}} to the SDE $\mathd X_t = b
  (t, X_t) \mathd t + \sqrt{2} \mathd B_t$.
\end{definition}

Note that \ref{quadraticvariationEuclid} implies that the local martingale
from \ref{martingalepropertyEuclid} is a proper martingale. For $\alpha \in
(0, 1]$ we denote by $C^{\alpha}$ the space of H{\"o}lder-continuous
trajectories $X : [0, T] \rightarrow \mathbb{R}^d$ and put $C^{\alpha -} =
\bigcap_{\alpha > \varepsilon > 0} C^{\alpha - \varepsilon}$.
   
   We have the following well-posedness result.
\begin{theorem}
  \label{maintheorem}Assume that (\ref{excond}) and (\ref{locallydiv}) hold.
  \begin{enumerateroman}
    \item If for an energy solution $X$ to (\ref{mainSDE}) it holds
    \begin{equation}
      \tau^{\infty} (X) = \infty, \mathbb{P}- a.s., \label{stopinftyinfty}
    \end{equation}
    then the law of $X$ is uniquely determined by its initial distribution.
    Moreover, $X$ is a Markov process.
    
    \item \label{Hoelderassump}Set $\alpha = \frac{1}{2} - \frac{1}{q} -
    \frac{1}{p}$, with $p, q$ as in (\ref{excond}), and suppose that for some
    $\delta > 0$
    \[ \tmop{Leb}_{\mathbb{R}^{d + 1}} (B_{\varepsilon} (K)) \lesssim
       \varepsilon^{1 / \alpha + \delta} . \]
    Then (\ref{stopinftyinfty}) holds for any energy solution such that $X \in
    C^{\alpha -}$ almost surely. Such an energy solution exists for any $\mu =
    \tmop{law} (X_0) \ll \tmop{Leb}_{\mathbb{R}^d}$. Consequently, equation
    (\ref{mainSDE}) is well-posed in the class of energy solutions $X \in
    C^{\alpha -}$.
  \end{enumerateroman}
\end{theorem}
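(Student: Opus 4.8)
The plan is to prove part (i) and part (ii) separately, with part (ii) supplying the hypothesis $\tau^{\infty}(X)=\infty$ a.s.\ that makes part (i) effective.

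\textbf{Part (i): uniqueness via a stopped Feynman--Kac identity.} Fix $\varepsilon>0$ and set $A_{\varepsilon}=Ag_{\varepsilon}\in\mathcal{A}$, $b_{\varepsilon}=\mathbf{b}(A_{\varepsilon})$; by the assumption $Ag_{\varepsilon}\in\mathcal{A}$ this is a drift within the scope of the PDE theory and energy-solution correspondence of \cite[Theorem 1]{GraefnerPerkowski24}. For $\phi\in C^{\infty}_{c}(\mathbb{R}^{d})$ and $T'\in(0,T]$ let $u_{\varepsilon}$ solve, in the regularity class of \cite{GraefnerPerkowski24}, the backward equation $\partial_{t}u_{\varepsilon}+\mathLaplace u_{\varepsilon}+b_{\varepsilon}\cdummy\nabla u_{\varepsilon}=0$ on $[0,T']\times\mathbb{R}^{d}$ with $u_{\varepsilon}(T',\cdummy)=\phi$; since $b_{\varepsilon}$ is divergence-free, $\|u_{\varepsilon}\|_{L^{\infty}}\leqslant\|\phi\|_{L^{\infty}}$. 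Given an energy solution $X$, I would introduce the $\sigma(X)$-stopping time $\tau_{\varepsilon}=\inf\{t\in[0,T']:(t,X_{t})\in\overline{B_{\varepsilon/2}(K)}\}$. On $[0,\tau_{\varepsilon})$ we have $(t,X_{t})\in B_{\varepsilon/2}(K)^{c}$, where $g_{\varepsilon}\equiv1$ and hence $\nabla g_{\varepsilon}\equiv0$, so the product rule $b_{\varepsilon}=bg_{\varepsilon}+A\nabla g_{\varepsilon}$ shows that $X$ stopped at $\tau_{\varepsilon}$ is an energy solution of the $b_{\varepsilon}$-equation up to $\tau_{\varepsilon}$; combining this with the PDE for $u_{\varepsilon}$, Lemma \ref{stoppedob} yields that $(u_{\varepsilon}(t\wedge\tau_{\varepsilon},X_{t\wedge\tau_{\varepsilon}}))_{t\in[0,T']}$ is an $\mathcal{F}^{X}$-martingale. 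Taking expectations at $t=T'$ and using $u_{\varepsilon}(T')=\phi$, $\|u_{\varepsilon}\|_{L^{\infty}}\leqslant\|\phi\|_{L^{\infty}}$, and that $u_{\varepsilon}$ is independent of the solution, for two energy solutions $X^{1},X^{2}$ with $\tmop{law}(X^{1}_{0})=\tmop{law}(X^{2}_{0})=\mu$ one gets $|\mathbb{E}[\phi(X^{1}_{T'})]-\mathbb{E}[\phi(X^{2}_{T'})]|\leqslant2\|\phi\|_{L^{\infty}}(\mathbb{P}(\tau_{\varepsilon}(X^{1})\leqslant T')+\mathbb{P}(\tau_{\varepsilon}(X^{2})\leqslant T'))$. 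By path continuity and compactness of $K$, $\overline{B_{\varepsilon/2}(K)}\downarrow K$ forces $\tau_{\varepsilon}(X^{i})\uparrow\tau^{\infty}(X^{i})$, so under (\ref{stopinftyinfty}) the right-hand side vanishes as $\varepsilon\downarrow0$, giving $\tmop{law}(X^{1}_{T'})=\tmop{law}(X^{2}_{T'})$ for all $T',\phi$.

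\textbf{Part (i): Markov property and finite-dimensional laws.} Running the same computation from a time $s$, conditioning on $\mathcal{F}^{X}_{s}$, with $u_{\varepsilon}$ now solving the PDE on $[s,t]$ with terminal datum $\phi$ and $\tau_{\varepsilon}^{s}=\inf\{r\geqslant s:(r,X_{r})\in\overline{B_{\varepsilon/2}(K)}\}$, gives $|\mathbb{E}[\phi(X_{t})\mid\mathcal{F}^{X}_{s}]-u_{\varepsilon}(s,X_{s})|\leqslant2\|\phi\|_{L^{\infty}}\mathbb{E}[\mathbf{1}_{\{\tau_{\varepsilon}^{s}\leqslant t\}}\mid\mathcal{F}^{X}_{s}]$. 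Letting $\varepsilon\downarrow0$ and invoking (\ref{stopinftyinfty}) shows $\mathbb{E}[\phi(X_{t})\mid\mathcal{F}^{X}_{s}]$ is an $L^{1}$-limit of functions of $X_{s}$, hence $\sigma(X_{s})$-measurable; moreover Definition \ref{defenergysolutionsEuclid}(\ref{incompressibleEucild}) forces $\tmop{law}(X_{s})\ll\tmop{Leb}$ (a $\tmop{Leb}$-null set has probability $\leqslant\varepsilon$ for every $\varepsilon>0$), so the limiting operator $H_{s,t}:\phi\mapsto(\tmop{law}(X_{s})$-a.e.\ limit of $u_{\varepsilon}(s,\cdummy))$ is well defined and solution-independent, and $\mathbb{E}[\phi(X_{t})\mid\mathcal{F}^{X}_{s}]=H_{s,t}\phi(X_{s})$. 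This is the Markov property, and together with the fixed initial law $\mu$ it pins down all finite-dimensional distributions, completing part (i).

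\textbf{Part (ii).} Existence of an energy solution for $\mu\ll\tmop{Leb}$ is in \cite{GraefnerPerkowski24}; its membership in $C^{\alpha-}$ follows by decomposing $X$ into its Brownian martingale part (in $C^{1/2-}\subset C^{\alpha-}$) and its drift part, for which the a priori bounds underlying the existence proof under (\ref{excond}) give moment estimates of order $|t-s|^{m\alpha}$, whence Kolmogorov's criterion and localization in space yield $C^{\alpha-}$. For (\ref{stopinftyinfty}), fix $\alpha'\in(0,\alpha)$ with $\tfrac1{\alpha'}-\tfrac1\alpha<\delta$ and $R>0$, and work on $E_{R}=\{\|X\|_{C^{\alpha'}([0,T])}\leqslant R\}$. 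If $(t_{0},X_{t_{0}})\in B_{\varepsilon}(K)$ for some $t_{0}\in[0,T]$, then on $E_{R}$, for $|r-t_{0}|\leqslant\sigma_{R}:=(\varepsilon/R)^{1/\alpha'}$ we have $|X_{r}-X_{t_{0}}|\leqslant R|r-t_{0}|^{\alpha'}\leqslant\varepsilon$ and, for $\varepsilon$ small, $|r-t_{0}|\leqslant\varepsilon$, so $(r,X_{r})\in B_{3\varepsilon}(K)$ and therefore $\int_{0}^{T}\mathbf{1}_{B_{3\varepsilon}(K)}(r,X_{r})\,\mathd r\geqslant\sigma_{R}$. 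Picking $f\in C^{\infty}_{c}([0,T]\times\mathbb{R}^{d})$ with $\mathbf{1}_{B_{3\varepsilon}(K)}\leqslant f\leqslant\mathbf{1}_{B_{4\varepsilon}(K)}$, so that $\|f\|_{L^{1}_{T}L^{1}}\lesssim\tmop{Leb}_{\mathbb{R}^{d+1}}(B_{4\varepsilon}(K))\lesssim\varepsilon^{1/\alpha+\delta}$, the second incompressibility bound of Definition \ref{defenergysolutionsEuclid}(\ref{incompressibleEucild}) gives, for every $\eta>0$ with the corresponding $M(\eta)$,
\[
  \mathbb{P}\big(\{\exists\,t_{0}\leqslant T:(t_{0},X_{t_{0}})\in B_{\varepsilon}(K)\}\cap E_{R}\big)\;\leqslant\;\mathbb{P}\Big(\sup_{t\leqslant T}\Big|{\textstyle\int_{0}^{t}}f(r,X_{r})\,\mathd r\Big|\geqslant\sigma_{R}\Big)\;\leqslant\;\eta+\frac{M(\eta)}{\sigma_{R}}\|f\|_{L^{1}_{T}L^{1}}\;\lesssim\;\eta+M(\eta)R^{1/\alpha'}\varepsilon^{\,1/\alpha+\delta-1/\alpha'}.
\]
Since $1/\alpha+\delta-1/\alpha'>0$, taking $\varepsilon\downarrow0$ and then $\eta\downarrow0$ (using $K\subset B_{\varepsilon}(K)$) gives $\mathbb{P}(\{\tau^{\infty}\leqslant T\}\cap E_{R})=0$, and $R\uparrow\infty$ together with $X\in C^{\alpha'}$ a.s.\ yields $\mathbb{P}(\tau^{\infty}\leqslant T)=0$, i.e.\ (\ref{stopinftyinfty}) (boundary effects for $t_{0}$ near $T$ are handled by passing to a slightly larger horizon, harmless by the remark opening Section \ref{Preliminaries}). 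The final well-posedness claim follows by feeding this into part (i).

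\textbf{Main obstacle.} The crux is the scale-matching in part (ii): the Hölder exponent $\alpha'$ dictates that an excursion into $B_{\varepsilon}(K)$ costs a macroscopic occupation time $\sigma_{R}\sim\varepsilon^{1/\alpha'}$ of the enlarged set, while the hypothesis on $K$ only affords $\|f\|_{L^{1}_{T}L^{1}}\sim\varepsilon^{1/\alpha+\delta}$; the argument closes precisely because the exponent in the hypothesis sits strictly above $1/\alpha$, leaving room to choose $\alpha'$ just below $\alpha$. The other delicate point is the rigorous version of part (i): establishing, through Lemma \ref{stoppedob}, that the stopped energy solution really does solve the regularized $b_{\varepsilon}$-problem up to $\tau_{\varepsilon}$ and hence inherits the martingale (Feynman--Kac) identity for $u_{\varepsilon}$, despite $u_{\varepsilon}$ and $X$ being of very low regularity, and checking that the relevant stopping times are of the form to which $\sigma(X)$-measurability and the conditioning arguments apply.
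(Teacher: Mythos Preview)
Your proposal is essentially correct, and Part (ii) is the same argument as the paper's Lemma \ref{relationdimHoelder}. For Part (i) you take a genuine variant of the paper's route, and your invocation of Lemma \ref{stoppedob} is slightly misplaced, so let me compare.

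\medskip

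\textbf{Difference in Part (i).} The paper solves the backward equation for the \emph{full} drift $b$ (Lemma \ref{PDEregularity} uses the global condition $A\in L^q_T L^p$ from (\ref{excond}) to produce $v\in L^2_TH^1\cap L^\infty_TL^\infty\cap C_TL^2$), then defines the stopped composition $u(t\wedge\tau^\delta,X_{t\wedge\tau^\delta})$ via Lemma \ref{stoppedob}, shows it is a martingale, and passes $\delta\to0$ using $\tau^\infty=\infty$ to obtain the exact identity $\mathbb{E}[v_0(X_{T'})]=\mathbb{E}[v(T',X_0)]$. You instead solve the backward equation for the \emph{regularized} drift $b_\varepsilon=\mathbf{b}(Ag_\varepsilon)$, show the stopped composition $u_\varepsilon(t\wedge\tau_\varepsilon,X_{t\wedge\tau_\varepsilon})$ is a martingale, and compare two solutions up to an error $\lesssim\|\phi\|_{L^\infty}\mathbb{P}(\tau_\varepsilon\leqslant T')$ which vanishes under (\ref{stopinftyinfty}). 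Your route dispenses with Lemma \ref{PDEregularity} (no PDE for the singular $b$ needed), at the price of not getting a closed formula for the transition operator; for uniqueness and the Markov property this is immaterial, and your $L^1$-limit argument for $\mathbb{E}[\phi(X_t)\mid\mathcal{F}^X_s]$ is a legitimate replacement for the paper's reference to the ``iterative standard argument''.

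\medskip

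\textbf{Where your citation is loose.} Lemma \ref{stoppedob} as stated only \emph{defines} the stopped object and records the bound $|u(t\wedge\tau^\delta,X_{t\wedge\tau^\delta})|\leqslant\|u\|_{L^\infty_TL^\infty}$ and identity (\ref{stoppedident}); it does not give the martingale property. The martingale property in the paper comes from the separate computation (\ref{convint})--(\ref{addfunclimit}), which is exactly the ``drift-switching'' step you describe informally (replacing $b$ by $b_\varepsilon$ on the stopped trajectory). You still need this step: for your $u_\varepsilon$ one must show that $\int_0^{t\wedge\tau_\varepsilon}(\partial_t+\Delta+b\cdummy\nabla)u_\varepsilon^\gamma(s,X_s)\,\mathd s\to0$ as $\gamma\to0$, which amounts to proving the analogue of (\ref{convint}) with $u_\varepsilon$ in place of $u$ and then observing that both sides converge to the same $L^2_TH^{-1}$ object. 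This works (and is, if anything, slightly easier since $A_\varepsilon\in\mathcal{A}$ globally), but it is the real content and should not be attributed to Lemma \ref{stoppedob} alone. Your ``Main obstacle'' paragraph correctly flags this as the delicate point.
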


\begin{remark}
  \label{relationtoGP24}Assumption (\ref{locallydiv}) is a mixture of the
  regularities from {\cite[Theorem 1]{GraefnerPerkowski24}} and
  {\cite[Theorem 4]{GraefnerPerkowski24}}, however with time-dependent singularities: Outside of the local singularity
  set $K$, one has that $A \in \mathcal{A}$ which relates to the regime of
  the divergence-free part of the drift in {\cite[Theorem
  1]{GraefnerPerkowski24}}. In that regime the link between energy solutions
  to the SDE and solutions to the Kolmogorov backward equation (KBE) is established by checking
  that the regularities of SDE, PDE and the drift operator are compatible and
  we use the same argument e.g. in the poof of Lemma \ref{PDEregularity} and
  in (\ref{addfunclimit}) below.\\
  \\
  Near $K$ however, the field $A$ is allowed to diverge away from this regime.
  This corresponds to the {\tmem{time-homogeneous}} case of {\cite[Lemma
  5]{GraefnerPerkowski24}} which gives a sufficient condition for
  {\cite[Theorem 4]{GraefnerPerkowski24}} and assumes the existence of a
  compact set $L \subset \mathbb{R}^d$ with
  \begin{equation}
    \sup_{\varepsilon > 0} \varepsilon^{- 2} \tmop{Leb} (B_{\varepsilon} (L)),
    \qquad \sup_{\varepsilon > 0} \varepsilon^{- 2} \int_{B_{\varepsilon} (L)}
    | A |^2 < \infty,
  \end{equation}
  and for all $\varepsilon > 0$,
  \begin{equation}
    A 1_{B_{\varepsilon}^c (L)} \in L^{\infty} .
  \end{equation}
  The analysis in {\cite{GraefnerPerkowski24}} then is more
  functional-analytic and focuses on constructing a unique semigroup $(P_t)$
  on $L^2 (\tmop{Leb})$ which has sufficient properties to conclude the
  correspondence to the process $X$. This approach is now replaced with a
  probabilistic argument involving the stopping times of visiting
  $\varepsilon$-neighbourhoods of $K$. Note that although we can treat the
  time-inhomogeneous case now, the condition $\lim_{\varepsilon \rightarrow 0}
  \sup_{t \in [0, T]} \tmop{Leb}_{\mathbb{R}^d} (B_{\varepsilon} (K)_t) = 0$
  in (\ref{locallydiv}) imposes some kind of time-regularity on the set $K$.\\
  \\
  Assume that $A$ is time-independent and $A \in L^p$. Then existence of
  energy solutions $X$ with $X \in C^{\alpha}$ for all $\alpha < \frac{p / 2 - 1}{p}$
  follows from the proof of Theorem 19 in {\cite{GraefnerPerkowski24}} for
  any initial distribution $\mu \ll \tmop{Leb}$. If we assume that $K = [0, T]
  \times \{ 0 \}$, then
  \[ \tmop{Leb} (B_{\varepsilon} (K)) \lesssim \varepsilon^d, \]
  so that condition \ref{Hoelderassump} from Theorem \ref{maintheorem} becomes
  \[ \left( \frac{p / 2 - 1}{p} \right)^{- 1} < d, \]
  and thus
  \[ p > \frac{2 d}{d - 2} . \]
  This is the same dimension-dependent lower bound as in {\cite[Example
  6]{GraefnerPerkowski24}}, which allows to treat proper elements $b \in
  W^{- 1, p, \tmop{loc}}$ for any $p > 2$ in high enough dimensions. Thus we
  do not expect to enter a fundamentally new regime of regularity with Theorem
  \ref{maintheorem}, at least not without imposing further structural
  conditions on $b$ as in {\cite{Takanobu85}}.
\end{remark}

\begin{remark}
  In {\cite{Takanobu85}}, the author considers a drift of the form $b (X) =
  \sum_{i \neq j} \gamma_j \nabla^{\perp} H  (X_i - X_j)$ with $X_i \in
  \mathbb{R}^d, 1 \leqslant i \leqslant n$, where $H (x) = g (| x |), x \in
  \mathbb{R}^d$, $g \in C^2 (0, \infty)$ and $g$ diverges at the origin in a
  specific way, which allows $g$ to even be non-integrable at $0$ via a
  fine-grained analysis of the corresponding stopping time $\tau^{\infty}$.
  Non-locally integrable (in space) $A$ is of course forbidden in our case
  since we require $\mathbf{b} (A)$ to at least be well-defined as a
  distribution and we need at least local $L^2$-integrability, even for
  existence of energy solutions.
\end{remark}

\section{Proof of Theorem \ref{maintheorem}}\label{maintheoremsection}

In order to not confuse notation with the $\varepsilon$ from Definition
\ref{defenergysolutionsEuclid}, we will use $\delta$ to describe closeness to
the set $K$ from now on. For an energy solution $X$ we define \[\tau^{\delta} =
\inf \{ t \in [0, T] : (t, X_t) \in \overline{B_{\delta} (K)} \}.\]Note that
$\tau^{\infty} = \lim_{\delta \rightarrow 0} \tau^{\delta}$. In the following
we often make use of the identity
\begin{equation}
  \mathbf{b} (A) \cdummy \nabla u = \nabla \cdummy (\mathbf{b} (A) u) = \nabla
  \cdummy (A \nabla u), \label{centralidentity}
\end{equation}
for antisymmetric $A$ and $u$ with sufficient regularities. Another property that we will frequently use without mentioning is that $g_\delta$ is
bounded and $\partial_t g_\delta, \nabla g_\delta$ are compactly supported which follows
both from the fact that $g_\delta \equiv 1$ outside of a compact set.\\
The next lemma establishes that solutions $v$ to the KBE for equation \ref{mainSDE} are in $C_T L^2$ so that we can in particular control compositions $v(t,X_t)$ with an energy solution $X$ uniformly in $t$.

\begin{lemma}
  \label{PDEregularity}For any $v_0 \in \mathcal{S}$ there exists a weak
  solution in the distributional sense in space and time $v \in L^2_T H^1 \cap
  L^{\infty}_T L^{\infty} \cap C_T L^2$ to
  \[ \partial_t v = \mathLaplace v + \nabla \cdummy (A \nabla v), v (0) = v_0
     . \]
\end{lemma}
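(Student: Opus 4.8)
The plan is to build $v$ by mollifying the coefficient field and passing to the limit in classical solutions. Mollify $A$ in the spatial variable (extending it suitably past $t=0,T$ if one also mollifies in time) to obtain $A_n\in C^\infty([0,T]\times\mathbb R^d;\tmop{Skew}_d)$, bounded for each fixed $n$, with $A_n\to A$ in $L^q_TL^p_{\tmop{loc}}$; spatial convolution preserves antisymmetry. Then $b_n:=\mathbf b(A_n)=\nabla\cdummy A_n$ is smooth, bounded and divergence-free, so for $v_0\in\mathcal S$ the equation $\partial_t v_n=\Delta v_n+\nabla\cdummy(A_n\nabla v_n)=\Delta v_n+b_n\cdummy\nabla v_n$, $v_n(0)=v_0$, has a unique solution $v_n\in C_TL^2\cap L^2_TH^1$, smooth for $t>0$ by parabolic regularity. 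Two a priori bounds are uniform in $n$: testing against $v_n$ and using that $A_n$ is pointwise antisymmetric, so the cross term $\langle A_n\nabla v_n,\nabla v_n\rangle$ vanishes as in (\ref{centralidentity}), gives the energy identity $\|v_n(t)\|_{L^2}^2+2\int_0^t\|\nabla v_n\|_{L^2}^2\,\mathd s=\|v_0\|_{L^2}^2$; and since the drift is in non-divergence form with $\nabla\cdummy b_n=0$, the parabolic maximum principle gives $\|v_n\|_{L^\infty_TL^\infty}\leqslant\|v_0\|_{L^\infty}$.

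For compactness, $\Delta v_n$ is bounded in $L^2_TH^{-1}$, while by H\"older $\|A_n\nabla v_n\|_{L^s}\lesssim\|A_n\|_{L^p}\|\nabla v_n\|_{L^2}$ with $s=2p/(p+2)\in[1,2)$, so $A_n\nabla v_n$ is bounded in $L^m_TL^s$ with $1/m=1/q+1/2$; crucially (\ref{excond}) forces $q>2$ (at $q=2$ the condition $p(1/2-1/q)>1$ fails), hence $m>1$. Thus $\partial_t v_n$ is bounded in $L^m_TH^{-N}_{\tmop{loc}}$ for $N$ large, and Aubin--Lions--Simon yields a subsequence with $v_n\to v$ in $L^2_TL^2_{\tmop{loc}}$ and in $C_TH^{-\sigma}_{\tmop{loc}}$, $v_n\rightharpoonup v$ in $L^2_TH^1$ and weakly-$*$ in $L^\infty([0,T]\times\mathbb R^d)$. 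Strong-times-weak convergence of $A_n\to A$ against $\nabla v_n\rightharpoonup\nabla v$ gives $A_n\nabla v_n\to A\nabla v$ in $\mathcal D'$, so $v$ solves $\partial_t v=\Delta v+\nabla\cdummy(A\nabla v)$ distributionally in space and time; the convergence in $C_TH^{-\sigma}_{\tmop{loc}}$ together with $v_n(0)=v_0$ fixes $v(0)=v_0$, and weak lower semicontinuity of the norms places $v$ in $L^2_TH^1\cap L^\infty_TL^\infty\cap L^\infty_TL^2$.

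It remains to improve $L^\infty_TL^2$ to $C_TL^2$, which I expect to be the main obstacle. Equicontinuity of $(v_n)$ in $H^{-\sigma}_{\tmop{loc}}$ together with the uniform $L^\infty_TL^2$ bound first gives $v\in C_w([0,T];L^2)$, so it suffices to show $t\mapsto\|v(t)\|_{L^2}$ is continuous. For this one wants the energy identity for $v$ itself, $\|v(t)\|_{L^2}^2+2\int_0^t\|\nabla v\|_{L^2}^2\,\mathd s=\|v_0\|_{L^2}^2$, obtained by testing the limiting equation against $v$ and again killing $\langle A\nabla v,\nabla v\rangle$ by antisymmetry; the delicate point is that in the supercritical regime $A\nabla v$ only lies in $L^m_TL^s$ with $s<2$, so the identity must be justified by a spatially localized regularization of $v^2$ (a renormalization argument), which is precisely the ``compatibility of SDE/PDE/drift regularities'' argument already carried out in \cite{GraefnerPerkowski24} and alluded to in Remark \ref{relationtoGP24}. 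Continuity of the $L^2$-norm together with weak continuity then upgrades $v$ to $C_TL^2$, completing the proof.
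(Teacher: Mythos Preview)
Your construction of a limit $v\in L^2_TH^1\cap L^\infty_TL^\infty$ from mollified coefficients, together with an equicontinuity/compactness argument fixing $v(0)=v_0$, is essentially what the paper does (it cites \cite{GraefnerPerkowski24} for this step and verifies $v\in C_T\mathcal{S}'$ directly). The gap is in the upgrade to $C_TL^2$. You propose to recover the exact energy identity for the limit $v$ by a renormalization of $v^2$, asserting that this is ``precisely the compatibility of SDE/PDE/drift regularities argument already carried out in \cite{GraefnerPerkowski24}''. That citation does not deliver what you need: under (\ref{excond}) alone one only has $A\nabla v\in L^m_TL^s$ with $s<2$, so $\partial_t v\notin L^2_TH^{-1}$ and the Lions--Magenes machinery is unavailable; a commutator argument would require $\rho^\epsilon\ast(A\nabla v)-A\nabla v^\epsilon\to 0$ in a space that pairs with $\nabla v^\epsilon\in L^2_TL^2$, and no such bound is at hand. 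The ``compatibility'' alluded to in Remark~\ref{relationtoGP24} is not a renormalization under (\ref{excond}) but precisely the decomposition $g_\delta A=A_1+A_2\in L^\infty_TL^\infty+L^2_TH^1$ supplied by the standing assumption (\ref{locallydiv}), which your argument never uses.

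The paper's route exploits (\ref{locallydiv}) explicitly. It first upgrades $v\in L^\infty_TL^\infty$ from a.e.\ $t$ to every $t$ via $C_T\mathcal{S}'$ and density. Then it localizes: since $g_\delta A\in\mathcal{A}$, the bounds $\|\nabla\cdummy(A_1\nabla v)\|_{H^{-1}}\lesssim\|A_1\|_{L^\infty}\|v\|_{H^1}$ and, via (\ref{centralidentity}), $\|\nabla\cdummy(A_2\nabla v)\|_{H^{-1}}\lesssim\|A_2\|_{H^1}\|v\|_{L^\infty}$ place $\partial_t(g_\delta v)\in L^2_TH^{-1}$, so Lions--Magenes (Evans, \S5.9.2) gives $g_\delta v\in C_TL^2$. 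Finally
\[
\|v(t)-v(s)\|_{L^2}\leqslant\|g_\delta v(t)-g_\delta v(s)\|_{L^2}+\|1_{B_\delta(K)}v(t)\|_{L^2}+\|1_{B_\delta(K)}v(s)\|_{L^2},
\]
and the last two terms are $\lesssim\|v\|_{L^\infty_TL^\infty}\sup_r\tmop{Leb}(B_\delta(K)_r)^{1/2}\to 0$ uniformly by (\ref{locallydiv}). In short, the singularity structure (\ref{locallydiv}) is not incidental to the lemma: it is the mechanism by which $\partial_t v$ is brought back into $L^2_TH^{-1}$ \emph{away from $K$}, and your proposal to dispense with it is not substantiated.
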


\begin{proof}
  Note that $\nabla \cdummy (A \nabla v)$ is well-defined since $A, \nabla v
  \in L^2_T L^{2, \tmop{loc}}$. Based on the approximated solutions
  \[ \partial_t v^n = \mathLaplace v^n + \nabla \cdummy (A^n \nabla v^n), v^n
     (0) = v_0, \]
  where $A^n = \rho^n \ast A$ and $\rho^n$ is a space-time mollifier,
  existence of $v$ for the case $A \in L^{\infty}_T L^{\infty} + L^2_T H^1$ is
  shown in {\cite{GraefnerPerkowski24}}. However it is rather straightforward
  to check that the argument therein works for our case of $A \in L^q_T L^{p}$ as in (\ref{excond}), at least to obtain a solution $v \in L^2_T H^1 \cap
  L^{\infty}_T L^{\infty} \cap C_T \mathcal{S}'$, see also the proof in
  {\cite{HaoZhang23}}. Indeed, finding a subsequential limit $v \in L^2_T H^1
  \cap L^{\infty}_T L^{\infty}$ that satisfies the solution property in the
  space-time distributional sense follows in the same way as in {\cite{GraefnerPerkowski24}}.    Moreover, given $f \in \mathcal{S}$, we have for $N
  = N (d) > 0$ big enough such that $(1 + | \mathbf{x} |)^{- N} \in L^1$ that
  \begin{eqnarray*}
    | \langle v^n (t), f \rangle - \langle v^n (s), f \rangle | & \leqslant &
    \int_s^t | \langle \nabla f, (\tmop{Id} + A^n (r)) \nabla v^n (r) \rangle
    | \mathd r\\
    & \leqslant & \| (1 + | \mathbf{x} |)^N \nabla f \|_{L^{\infty}} \\
    &&\times \int_s^t
    \| (1 + | \mathbf{x} |)^{- N} (\tmop{Id} + A^n (r)) \nabla v^n (r)
    \|_{L^1} \mathd r \\
    & \lesssim_{N (d), T} & \| (1 + | \mathbf{x} |)^N \nabla f
    \|_{L^{\infty}} \sup_n \| v^n \|_{L^2_T H^1} \int_s^t (1 + \| A^n (r)
    \|_{L^p})^q \mathd r\\
    & \lesssim & \| (1 + | \mathbf{x} |)^N \nabla f \|_{L^{\infty}} \sup_n \|
    v^n \|_{L^2_T H^1} (| t - s | + \| A |_{[s, t]} \|_{L^q
    L^p}^q),
  \end{eqnarray*}
  which shows that $(\langle v^n, f \rangle)_{n \in \mathbb{N}}$ is
  equicontinuous and thus by the Arzela-Ascoli theorem and the fact that we
  may assume weak convergence in $L^2_T H^1$ to $v$, we obtain a unique limit
  and the above estimate and the fact that $v_0 \in \mathcal{S}$ allows us to
  conclude that $v \in C_T \mathcal{S}'$ with $v (0) = v_0$.\\In order to obtain $v \in C_T L^2$, we first note that
  actually
  \begin{equation}
    \sup_{t \geqslant 0} \| v (t) \|_{L^{\infty}} < \infty,
    \label{alltbounded}
  \end{equation}
  since for a full-measure and hence dense set $D \subset [0, T]$ we have
  $\sup_{t \in D} \| v (t) \|_{L^{\infty}}<\infty$ and thus for any $t \in [0, T]$
  there exists $t_m \rightarrow t$ with $t_m \in D$, meaning that for $f \in
  \mathcal{S}$,
  \[ | \langle v (t), f \rangle | = \lim_m | \langle v (t_m), f \rangle |
     \leqslant \sup_{s \in D} \| v (s) \|_{L^{\infty}} \| f \|_{L^1}, \]
  and this implies that $v (t) \in (L^1)^{\ast} = L^{\infty}$ with $\| v (t)
  \|_{L^{\infty}} \leqslant \sup_{s \in D} \| v (s) \|_{L^{\infty}}$. Now, for
  every $\delta > 0$ we have $g_{\delta} v \in L^2_T H^1 \cap H^1_T H^{- 1}$.
  Indeed, we only have to note that in the weak sense
  \[ \partial_t (g_{\delta} v) = v \partial_t g_{\delta} + g_{\delta} \partial_t
     v = v \partial_t g_{\delta} + g_{\delta} (\mathLaplace v + \nabla \cdummy
     (g_{\delta / 4} A \nabla v)) . \]
  Since $g_{\delta / 4} A \in \mathcal{A}$ we have $g_{\delta / 4} A = A_1 +
  A_2$ for some $A_1 \in L^{\infty}_T L^{\infty} (\tmop{Skew}_d), A_2 \in
  L^2_T H^1 (\tmop{Skew}_d)$. Now we can estimate
  \[ \| \nabla \cdummy (A_1 \nabla v) \|_{L^2_T H^{- 1}} \lesssim \| A_1
     \|_{L^{\infty}_T L^{\infty}} \| v \|_{L^2_T H^1}, \]
  and using $\nabla \cdummy (A_2 \nabla v) = \nabla \cdummy (\mathbf{b} (A_2)
  v)$, which can be checked by approximation, we also have
  \[ \| \nabla \cdummy (\textbf{} A_2 \nabla v) \|_{L^2_T H^{- 1}} \lesssim \|
     \mathbf{b} (A_2) v \|_{L^2_T L^2} \lesssim {\| A_2 \|_{L^2_T H^1}}  \| v
     \|_{L^{\infty}_T L^{\infty}}, \]
  so that indeed $\partial_t g_{\delta} v \in L^2_T H^1$. Theorem 3 in
  {\textsection} 5.9.2 of Part II in {\cite{Evans98}} thus yields $g_{\delta}
  v \in C_T L^2$. Finally to conclude that $v \in C_T L^2$ we fix some $t \in
  [0, T]$ and write
  \[ \| v (t) - v (s) \|_{L^2} \leqslant \| g_{\delta} v (t) - g_{\delta} v
     (s) \|_{L^2} + \| 1_{B_{\delta} (K)} v (t) \|_{L^2} + \| 1_{B_{\delta}
     (K)} v (s) \|_{L^2} . \]
  Using (\ref{alltbounded}) the second and third term can be bounded by
  \[ \| 1_{B_{\delta} (K)} v (t) \|_{L^2} \lesssim \tmop{Leb}_{\mathbb{R}^d}
     (B_{\delta} (K)_t)^{1/2}, \]
  and this bound can be made arbitrarily small, uniformly in $t$, by choosing
  $\delta$ small by (\ref{locallydiv}).
\end{proof}

In the following we will refer several times to the properties \ref{incompressibleEucild}-\ref{quadraticvariationEuclid} from the definition of energy solutions and denote them in that way. Our goal is to see how we can use the fact that $A$ and the much better object $g_\delta A$ look the same for the stopped process $X_{t\wedge \tau^\delta}$ in order to get better control on the corresponding stopped additive functional.

Let $T' \in [0, T]$, consider a weak solution $v \in L^2_T H^1 \cap
L^{\infty}_T L^{\infty} \cap C_T L^2$ on $[0, T']$ to
\[ \partial_t v = \mathLaplace v + \nabla \cdummy (A (T' - \cdummy) \cdummy
   \nabla v), v (0) \in \mathcal{S} \]
and set $u = v (T' - \cdummy)$ as well as $u^{\gamma} = \rho^{\gamma} \ast u$,
where $\rho^{\gamma}=\gamma^{-d-1}\rho(\gamma^{-1}\cdot)$ and $\rho$ is a smooth mollifier in space and time.

Since $\nabla u^{\gamma} \in C_T L^r$ for any $r \in [2, \infty]$ we have $b
\cdummy \nabla u^{\gamma} = \nabla \cdummy (A \nabla u^{\gamma}) \in L^2_T
H^{- 1}$ and thus we can write
\begin{eqnarray*}
&&\int_0^{t \wedge \tau^{\delta}} (\partial_t + \mathLaplace + b \cdummy
   \nabla) u^{\gamma} (s, X_s) \mathd s\\
 &=&\int_0^{t \wedge \tau^{\delta}}
   (\mathLaplace + \nabla \cdummy (A \nabla \cdummy)) u^{\gamma} (s, X_s) -
   \rho^{\gamma} \ast [(\mathLaplace + \nabla \cdummy (A \nabla \cdummy)) u]
   (s, X_s) \mathd s,  
\end{eqnarray*}
where the last term comes from the fact that $\rho^{\gamma} \ast$ and
$\partial_t$ commute.

Since $\rho^{\gamma} \ast [(\mathLaplace + \nabla \cdummy (A \nabla \cdummy))
u] \in C^{\infty}_T C^{\infty}_b$ it is straightforward to check with an
approximation by $C^{\infty}_c$ functions that the (stopped) additive
functional $I (\rho^{\gamma} \ast [(\mathLaplace + \nabla \cdummy (A \nabla
\cdummy)) u])_{t \wedge \tau^{\delta}}$ appearing in the last line is equal to
the actual well-defined integral over the corresponding process. Thus, using the definition of $\tau^\delta$, we have
\[ 1_{\tau^\delta >0}I (\rho^{\gamma} \ast [(\mathLaplace + \nabla \cdummy (A \nabla \cdummy))
   u])_{t \wedge \tau^{\delta}} =1_{\tau^\delta>0} \int_0^{t \wedge \tau^{\delta}}
   \rho^{\gamma} \ast [(\mathLaplace + \nabla \cdummy (g_{\delta} A \nabla
   \cdummy)) u] (s, X_s) \mathd s, \]
for $\gamma (\delta)$ small enough such such that $\gamma \sup \{|(t,x)|:(t,x)\in \text{supp}\rho\} < \frac{\delta}{2}$. The integral on the r.h.s. is now also
a-priori well-defined as (and equal to) $I (\rho^{\gamma} \ast [(\mathLaplace
+ \nabla \cdummy (g_{\delta} A \nabla \cdummy)) u])_{t \wedge \tau^{\delta}}$,
even without the mollification, since $g_{\delta} A = A_1 + A_2$ for some $A_1
\in L^{\infty}_T L^{\infty} (\tmop{Skew}_d), A_2 \in L^2_T H^1
(\tmop{Skew}_d)$ and then $\nabla \cdummy (g_{\delta} A \nabla u) \in L^2_T
H^{- 1}$ due to (\ref{centralidentity}) as in the proof of Lemma
\ref{PDEregularity}. Arguing in a similar way we can approximate $C^{\infty}_c
\ni A^n \rightarrow A$ in $L^q_T L^p$ and $C^{\infty}_c
\ni u^{\gamma, n} \rightarrow
u^{\gamma}$ in $C_T H^{1 + \varepsilon, p'}$ where $\frac{1}{p'} + \frac{1}{p}
= \frac{1}{2}$, meaning that
\[
\nabla\cdot (A^n\nabla u^{\gamma, n})\rightarrow \nabla\cdot (A\nabla u^{\gamma}) ,
\]
to get that
\begin{eqnarray*}
  I ((\mathLaplace + \nabla \cdummy (A \nabla \cdummy)) u^{\gamma} (s,
  X_s))_{t \wedge \tau^{\delta}}  & =_{\ref{martingalepropertyEuclid}} &
  \lim_{\tmop{prob}} \int_0^{t \wedge \tau^{\delta}} (\mathLaplace + \nabla
  \cdummy (A^n \nabla \cdummy)) u^{\gamma, n} (s, X_s) \mathd s\\
  & = & \lim_{\tmop{prob}} \int_0^{t \wedge \tau^{\delta}} (\mathLaplace +
  \nabla \cdummy (g_{\delta} A^n \nabla \cdummy)) u^{\gamma, n} (s, X_s) \mathd s\\
  & =_{\ref{martingalepropertyEuclid}} & I ((\mathLaplace + \nabla \cdummy
  (g_{\delta} A \nabla \cdummy)) u^{\gamma} (s, X_s))_{t \wedge \tau^{\delta}}
  ,
\end{eqnarray*}
where $\lim_{\tmop{prob}}$ denotes the limit in probability. \\In total, we obtain, now writing $I$ symbolically as an integral again, as in the rest
of this paper,
\begin{eqnarray}
  \int_0^{t \wedge \tau^{\delta}} (\partial_t + \mathLaplace + b \cdummy
  \nabla) u^{\gamma} (s, X_s) \mathd s & = & \int_0^{t \wedge \tau^{\delta}}
   \nabla \cdummy (g_{\delta} A \nabla u^{\gamma}) (s,
  X_s) \nonumber\\
  &  & - \rho^{\gamma} \ast [ \nabla \cdummy (g_{\delta} A
  \nabla u) ] (s, X_s) \mathd s,  \label{convint}
\end{eqnarray}
for $\gamma (\delta)$ small enough. It follows from a straightforward
approximation argument using {\cite[Proposition~IX.1.17]{Jacod2003}} that the
martingale property \ref{martingalepropertyEuclid} from Definition
\ref{defenergysolutionsEuclid} and the statement
\ref{quadraticvariationEuclid} on the quadratic variation hold for
$u^{\gamma}$ as test functions even though they are not compactly supported
and we still write $M^{u^{\gamma}}$ etc. for the corresponding (proper!)
martingales.\\
The goal of the next Lemma is to make sense of the stopped object
\[ u(t\wedge\tau^\delta,X_{t\wedge\tau^\delta}) ,
\] 
which will appear in the proof of Theorem \ref{maintheorem}. This is a non-trivial problem due to the low regularities of $u$ and $X$, e.g. it is not clear if $\text{law} (X_{t\wedge\tau^\delta})\ll\text{Leb}$, so that even $u(t,X_{t\wedge\tau^\delta})$ might not be well-defined a-priori.

\begin{lemma}
  \label{stoppedob}Let $\delta > 0$. Then,
  \[ (u^{\gamma} (\cdot \wedge \tau^{\delta}, X_{\cdot \wedge \tau^{\delta}}))_{\gamma
     \in \mathbb{R}}, \]
  is Cauchy in $\tmop{ucp}$-topology and consequently in $L^p (C ([0, T']))$
  for any $1 \leqslant p < \infty$. Moreover,
  \[ u (t \wedge \tau^{\delta}, X_{t \wedge \tau^{\delta}}) \assign
     \lim_{\gamma} u^{\gamma} (t \wedge \tau^{\delta}, X_{t \wedge
     \tau^{\delta}}), \]
  satisfies
  \[ u (t \wedge \tau^{\delta}, X_{t \wedge \tau^{\delta}}) \leqslant \| u
     \|_{L^{\infty}_T L^{\infty}}, t \in [0, T'], \mathbb{P}- a.s. \]
  and
  \begin{equation}
    1_{\tau^{\delta} \geqslant t} u (t \wedge \tau^{\delta}, X_{t \wedge
    \tau^{\delta}}) = 1_{\tau^{\delta} \geqslant t} u (t, X_t),
    \mathbb{P}- a.s., t \in [0, T'] . \label{stoppedident}
  \end{equation}
\end{lemma}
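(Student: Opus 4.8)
The guiding idea is that, although the stopped process $X_{\cdummy\wedge\tau^\delta}$ may fail to have Lebesgue-absolutely continuous time marginals --- which is exactly why $u(t,X_{t\wedge\tau^\delta})$ is delicate --- the \emph{smooth} approximants $u^\gamma$ may be fed into the martingale and quadratic-variation properties, which, as recorded above, hold for $u^\gamma$ as a (non-compactly supported) test function. Stopping the resulting decomposition of $u^\gamma(t,X_t)$ at $\tau^\delta$, which is a genuine stopping time since it is the hitting time of the closed set $\overline{B_\delta(K)}$ by the continuous process $(t,X_t)$, and inserting (\ref{convint}) yields, for $\gamma=\gamma(\delta)$ small enough,
\[
u^\gamma(t\wedge\tau^\delta,X_{t\wedge\tau^\delta}) = u^\gamma(0,X_0) + M^{u^\gamma}_{t\wedge\tau^\delta} + \int_0^{t\wedge\tau^\delta}\big(\nabla\cdummy(g_\delta A\nabla u^\gamma) - \rho^\gamma\ast[\nabla\cdummy(g_\delta A\nabla u)]\big)(s,X_s)\,\mathd s .
\]
I would then establish Cauchyness in $\tmop{ucp}$ by taking the difference of this identity for two parameters $\gamma,\gamma'\to0$ and controlling the three terms on the right separately; the $L^p(C([0,T']))$ statement and the sup-bound on the limit are then immediate from the uniform bound $\|u^\gamma\|_{L^\infty_T L^\infty}\le\|u\|_{L^\infty_T L^\infty}$ and dominated convergence, and (\ref{stoppedident}) follows, for fixed $t$, by passing to the limit along a common subsequence in the pointwise identity $1_{\tau^\delta\ge t}u^\gamma(t\wedge\tau^\delta,X_{t\wedge\tau^\delta})=1_{\tau^\delta\ge t}u^\gamma(t,X_t)$, using that $\tmop{law}(X_t)\ll\tmop{Leb}$ (see below) together with $u^\gamma(t,\cdummy)\to u(t,\cdummy)$ in $L^2$, which holds since $u\in C_T L^2$.

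For the martingale term, linearity gives $M^{u^\gamma}-M^{u^{\gamma'}}=M^{u^\gamma-u^{\gamma'}}$ with quadratic variation $\int_0^{\cdummy}|\nabla(u^\gamma-u^{\gamma'})(s,X_s)|^2\,\mathd s$; since $\nabla u^\gamma\to\nabla u$ in $L^2_T L^2$ (because $u\in L^2_T H^1$), this integrand tends to $0$ in $L^1_T L^1$, so by property \ref{incompressibleEucild} --- extended from $C^\infty_c$ to $L^1_T L^1$ exactly as the additive functional $I$ is extended --- the quadratic variation at time $T'$ tends to $0$ in probability, and the elementary bound $\mathbb P(\sup_{t\le T'}|M^{u^\gamma-u^{\gamma'}}_{t\wedge\tau^\delta}|\ge\lambda)\le\mathbb P(\langle M^{u^\gamma-u^{\gamma'}}\rangle_{T'}\ge\eta)+\eta/\lambda^2$ (localize at the first time the quadratic variation exceeds $\eta$, then apply Doob to the stopped, hence $L^2$-bounded, martingale) closes this term. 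For the integral term I would show $\nabla\cdummy(g_\delta A\nabla u^\gamma)-\rho^\gamma\ast[\nabla\cdummy(g_\delta A\nabla u)]\to0$ in $L^2_T H^{-1}$ and then invoke the energy estimate \ref{admissibleEucild} (extended to $L^2_T H^{-1}$ via $I$) together with $\sup_{t\le T'}|\int_0^{t\wedge\tau^\delta}\cdummy|\le\sup_{t\le T'}|\int_0^t\cdummy|$. Finally, property \ref{incompressibleEucild} forces $\tmop{law}(X_t)\ll\tmop{Leb}$ for every $t$ (a Lebesgue-null set receives probability $\le\varepsilon$ for every $\varepsilon>0$), so from $u^\gamma(0,\cdummy)\to u(0,\cdummy)=v(T')$ in $L^2$ and $\sup_\gamma\|u^\gamma(0,\cdummy)\|_{L^\infty}<\infty$ a subsequence-plus-dominated-convergence argument gives $u^\gamma(0,X_0)\to u(0,X_0)$ in probability.

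The main obstacle is the $L^2_T H^{-1}$-convergence $\nabla\cdummy(g_\delta A\nabla u^\gamma)\to\nabla\cdummy(g_\delta A\nabla u)$. Writing $g_\delta A=A_1+A_2$ with $A_1\in L^\infty_T L^\infty(\tmop{Skew}_d)$ and $A_2\in L^2_T H^1(\tmop{Skew}_d)$ as in the proof of Lemma \ref{PDEregularity}, the $A_1$-contribution is harmless, being bounded by $\|A_1\|_{L^\infty_T L^\infty}\|u^\gamma-u\|_{L^2_T H^1}\to0$; for the $A_2$-contribution one uses (\ref{centralidentity}) to write $\nabla\cdummy(A_2\nabla(u^\gamma-u))=\nabla\cdummy(\mathbf b(A_2)(u^\gamma-u))$, so one must bound $\|\mathbf b(A_2)(u^\gamma-u)\|_{L^2_T L^2}$, and here one \emph{cannot} use $L^\infty$-convergence of $u^\gamma$ to $u$ (which fails for merely bounded $u$) --- instead $u^\gamma\to u$ in $L^2_T L^2$ with a uniform $L^\infty$-bound, $\mathbf b(A_2)\in L^2_T L^2$, and dominated convergence along subsequences (then the usual subsequence argument) force $\|\mathbf b(A_2)(u^\gamma-u)\|_{L^2_T L^2}\to0$. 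The convergence $\rho^\gamma\ast[\nabla\cdummy(g_\delta A\nabla u)]\to\nabla\cdummy(g_\delta A\nabla u)$ is then immediate, $\nabla\cdummy(g_\delta A\nabla u)$ being a fixed element of $L^2_T H^{-1}$. A minor technical point, which I would not dwell on, is the behaviour of the space-time mollification near the time endpoints $0$ and $T'$, handled by extending $u$ slightly in time.
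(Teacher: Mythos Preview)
Your proposal is correct and follows essentially the same route as the paper: decompose $u^\gamma(t\wedge\tau^\delta,X_{t\wedge\tau^\delta})$ via the martingale property and (\ref{convint}), then control the initial term, the additive-functional term (via the $g_\delta A=A_1+A_2$ split and dominated convergence for the $A_2$-piece, exactly as you do), and the martingale term by a localization-at-the-quadratic-variation argument. The paper uses BDG after localizing at $\sigma=\inf\{t:\langle M\rangle_t\ge 2\kappa^3\}$ where you use Doob, and it handles the initial term by cutting off $|X_0|\le L$ and invoking incompressibility in its $L^1$ form rather than deducing $\tmop{law}(X_0)\ll\tmop{Leb}$, but these are cosmetic variants; your treatment of (\ref{stoppedident}) is in fact slightly more explicit than the paper's.
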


\begin{proof}
  In the following we write $u^{\gamma_1, \gamma_2} = u^{\gamma_1} -
  u^{\gamma_2}$. We have that
  \begin{eqnarray*}
  & & \mathbb{P} (\sup_{t \in [0, T']} | u^{\gamma_1, \gamma_2} (t \wedge
    \tau^{\delta}, X^{\tau^{\delta}}) | > \kappa)\\
     & \leqslant & \mathbb{P}
    \left( | u^{\gamma_1, \gamma_2} (0, X_0) | > \frac{\kappa}{3}, | X_0 |
    \leqslant L \right)\\
    &  & +\mathbb{P} (| X_0 | > L)\\
    &  & +\mathbb{P} \left( \sup_{t \in [0, T']} \left| \int_0^{t \wedge
    \tau^{\delta}} (\partial_s + \mathLaplace + b \cdummy \nabla) u^{\gamma_1,
    \gamma_2} (s, X_s) \mathd s \right| > \frac{\kappa}{3} \right)\\
    &  & +\mathbb{P} \left( \sup_{t \in [0, T']} \left| M^{u^{\gamma_1,
    \gamma_2}}_{t \wedge \tau^{\delta}} \right| > \frac{\kappa}{3}, \right) .
  \end{eqnarray*}
  We bound each term an redefine the value of $M (\varepsilon)$ below along
  the way if necessary.
  
  By incompressibility, we can bound
  \begin{eqnarray*}
    \mathbb{P} \left( | u^{\gamma_1, \gamma_2} (0, X_0) | > \frac{\kappa}{3},
    | X_0 | \leqslant L \right) & \leqslant & \varepsilon / 5 + M
    (\varepsilon) \tmop{Leb} \left( 1_{\overline{B_L (0)}} | u^{\gamma_1,
    \gamma_2} (0) | > \frac{\kappa}{3} \right)\\
    & \leqslant & \varepsilon / 5 + \frac{3 M (\varepsilon)}{\kappa} \left\|
    1_{\overline{B_L (0)}} u^{\gamma_1, \gamma_2} (0) \right\|_{L^1}\\
    & \lesssim_L & \varepsilon / 5 + \frac{3 M (\varepsilon)}{\kappa} \|
    u^{\gamma_1, \gamma_2} \|_{C_T L^2} .
  \end{eqnarray*}
  Choosing $L$ large enough we have
  \[ \mathbb{P} (| X_0 | > L) \leqslant \varepsilon / 5 . \]
  Furthermore,
  \begin{eqnarray*}
    \mathbb{P} \left( \sup_{t \in [0, T']} \left| \int_0^{t \wedge
    \tau^{\delta}} (\partial_s + \mathLaplace + b \cdummy \nabla) u^{\gamma_1,
    \gamma_2} (s, X_s) \mathd s \right| > \frac{\kappa}{3} \right)&&\\
    \leqslant_{\left( \ref{convint} \right), \ref{admissibleEucild}}
    \frac{\varepsilon}{5} + \frac{3 M (\varepsilon)}{\kappa} \| \nabla \cdummy (g_{\delta} A \nabla u^{\gamma_1, \gamma_2})
    \|_{L^2_T H^{- 1}}&& \\
    + \frac{3 M (\varepsilon)}{\kappa} \| (\rho^{\gamma_1}
    - \rho^{\gamma_2}) \ast ( \nabla \cdummy (g_{\delta} A
    \nabla u))  \|_{L^2_T H^{- 1}} . &&
  \end{eqnarray*}
  Writing again $g_{\delta} A = A_1 + A_2$ as above, and arguing as in the
  proof of Lemma \ref{PDEregularity}, we have that $
  \rho^{\gamma} \ast \nabla \cdummy (g_{\delta} A \nabla u)$ is Cauchy
  in $L^2_T H^{- 1}$. Also $ \nabla \cdummy (g_{\delta} A
  \nabla  u^{\gamma})$ is Cauchy in this space for the same reason. We
  only have to be careful with the $A_2$-term since $u^{\gamma} \rightarrow u$
  only a.e. uniformly boundedly in $L^{\infty}_T L^{\infty}$ but this is
  sufficient as we can apply the dominated convergence theorem to
  \[ \| \nabla \cdummy (\textbf{} A_2 \nabla (u - u^{\gamma})) \|_{L^2_T H^{-
     1}}^2 \lesssim \int | \mathbf{b} (A_2) (u - u^{\gamma}) |^2 \mathd x \mathd t, \]
  using $\mathbf{b} (A_2) \in L^2_T L^2$.
  
  Finally, defining the stopping time $\sigma = \inf \left\{ t \leqslant T :
  \langle M^{u^{\gamma_1, \gamma_2}} \rangle_t \geqslant 2 \kappa^3 \right\}$
  we have by incompressibility,
  \begin{eqnarray*}
    \mathbb{P} \left( \sup_{t \in [0, T']} \left| M_{t \wedge
    \tau^{\delta}}^{u^{\gamma_1, \gamma_2}} \right| > \frac{\kappa}{3} \right)
    & \leqslant & \mathbb{P} \left( \langle M^{u^{\gamma_1, \gamma_2}}
    \rangle_T > \kappa^3 \right) \\
    & & +\mathbb{P} \left( \sup_{t \in [0, T']}
    \left| M_{t \wedge \tau^{\delta}}^{u^{\gamma_1, \gamma_2}} \right| >
    \frac{\kappa}{3}, \langle M^{u^{\gamma_1, \gamma_2}} \rangle_T \leqslant
    \kappa^3 \right)\\
    & \leqslant & \frac{\varepsilon}{5} + \frac{M (\varepsilon)}{\kappa^3} \|
    (\nabla u^{\gamma_1, \gamma_2})^2 \|_{L^1_T L^1}\\
    &&+\mathbb{P} \left(
    \sup_{t \in [0, T']} \left| M_{t \wedge \tau^{\delta}}^{u^{\gamma_1,
    \gamma_2}} \right| > \frac{\kappa}{3}, \sigma = \infty \right)\\
    & \lesssim & \frac{\varepsilon}{5} + \frac{M (\varepsilon)}{\kappa^3} \|
    u^{\gamma_1, \gamma_2} \|_{L^2_T H^1}^2 +\mathbb{P} \left( \sup_{t \in [0,
    T']} \left| M_{t \wedge \tau^{\delta} \wedge \sigma}^{u^{\gamma_1,
    \gamma_2}} \right| > \frac{\kappa}{3} \right)\\
    & \lesssim & \frac{\varepsilon}{5} + \frac{M (\varepsilon)}{\kappa^3} \|
    u^{\gamma_1, \gamma_2} \|_{L^2_T H^1}^2 + \frac{3}{\kappa} \mathbb{E}
    \left[ \left( \langle M^{u^{\gamma_1, \gamma_2}} \rangle_{T' \wedge
    \tau^{\delta} \wedge \sigma} \right)^{1 / 2} \right]\\
    & \leqslant & \frac{\varepsilon}{5} + \frac{M (\varepsilon)}{\kappa^3} \|
    u^{\gamma_1, \gamma_2} \|_{L^2_T H^1}^2 + \frac{3}{\kappa} (2 \kappa^3)^{1
    / 2},
  \end{eqnarray*}
  where we applied Markov- and then BDG inequality in the fourth step.
  
  This bound means that given $\kappa > 0$ and $\varepsilon > 0$ we can choose
  $\tilde{\kappa} = \tilde{\kappa} (\varepsilon, \kappa) < \kappa$ small
  enough such that $\frac{3}{\tilde{\kappa}} (2 \tilde{\kappa}^3)^{1 / 2} <
  \frac{\varepsilon}{5}$ so that, redefining $M (\varepsilon)$ along the way,
  \begin{eqnarray*}
    \mathbb{P} \left( \sup_{t \in [0, T']} \left| M_{t \wedge
    \tau^{\delta}}^{u^{\gamma_1, \gamma_2}} \right| > \frac{\kappa}{3} \right)
    & \leqslant & \mathbb{P} \left( \sup_{t \in [0, T']} \left| M_{t \wedge
    \tau^{\delta}}^{u^{\gamma_1, \gamma_2}} \right| > \frac{\tilde{\kappa}}{3}
    \right)\\
    & \lesssim & \frac{2 \varepsilon}{5} + \frac{M
    (\varepsilon)}{\tilde{\kappa}^3 (\varepsilon, \kappa)} \| u^{\gamma_1,
    \gamma_2} \|_{L^2_T H^1}^2\\
    & = & \frac{2 \varepsilon}{5} + M (\varepsilon, \kappa) \| u^{\gamma_1,
    \gamma_2} \|_{L^2_T H^1}^2 .
  \end{eqnarray*}
  Combining all estimates we get the Cauchy property for $(u^{\gamma} (\cdummy
  \wedge \tau^{\delta}, X_{\cdummy \wedge \tau^{\delta}}))_{\gamma}$ in the
  $\tmop{ucp}$-topology. At the same time we have
  \[ | u^{\gamma} (t \wedge \tau^{\delta}, X_{t \wedge \tau^{\delta}}) |
     \leqslant \| u \|_{L^{\infty}_T L^{\infty}}, t \in [0, T'], \mathbb{P}-
     a.s, \]
  which implies convergence in $L^p (C [0, T'])$ for any $1 \leqslant p <
  \infty$ and this bound is inherited in the limit. Also the identity
  \[ 1_{\tau^{\delta} \geqslant t} u^{\gamma} (t \wedge \tau^{\delta}, X_{t
     \wedge \tau^{\delta}}) = 1_{\tau^{\delta} \geqslant t} u^{\gamma} (t,
     X_t), \]
  is inherited by the limit in this way.
\end{proof}
\\The final ingredient for the proof of Theorem \ref{maintheorem} is the following lemma which tells us that if the “dimension” of $K$ is small enough, it is actually never visited by $X$, thus allowing us to remove $\tau^\delta$ by taking $\delta\rightarrow 0$. Since the proof essentially only uses the Hölder regularity of $X$ and incompressibility, we do not expect this to be a new result.
\begin{lemma}
  \label{relationdimHoelder}Assume that almost-surely $X \in C^{\alpha}$ for
  some $\alpha \in (0, 1]$ and that for some $\delta > 0$
  \[ \tmop{Leb}_{\mathbb{R}^{d + 1}} (B_{\varepsilon} (K)) \lesssim
     \varepsilon^{1 / \alpha + \delta} . \]
  Then, $\mathbb{P}$-a.s.,
  \[ \tau^{\infty} = \infty . \]
\end{lemma}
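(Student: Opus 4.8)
The mechanism behind the statement is that, by $\alpha$-Hölder continuity, a single visit of the space-time path $s\mapsto(s,X_s)$ to $K$ forces it to remain inside $B_\varepsilon(K)$ throughout a time interval of length of order $\varepsilon^{1/\alpha}$, whereas $\tmop{Leb}_{\mathbb{R}^{d+1}}(B_\varepsilon(K))$ is of the strictly smaller order $\varepsilon^{1/\alpha+\delta}$; feeding the resulting occupation-time lower bound into the incompressibility estimate \ref{incompressibleEucild} will make the probability of a visit vanish. Concretely, for $\varepsilon>0$ put $f_\varepsilon:=1-g_\varepsilon\in C^\infty_c(\mathbb{R}\times\mathbb{R}^d)$, so that $0\leqslant f_\varepsilon\leqslant 1$, $f_\varepsilon\equiv 1$ on $B_{\varepsilon/4}(K)$ and $\operatorname{supp}f_\varepsilon\subset\overline{B_{\varepsilon/2}(K)}$; in particular $\|f_\varepsilon\|_{L^1_TL^1}\leqslant\tmop{Leb}_{\mathbb{R}^{d+1}}(B_{\varepsilon/2}(K))\lesssim\varepsilon^{1/\alpha+\delta}$.

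For $R>0$ set $A_R=\{[X]_{C^\alpha([0,T])}\leqslant R\}$. Since $X\in C^\alpha$ almost surely, $\mathbb{P}(A_R)\uparrow 1$ as $R\to\infty$, so it suffices to show $\mathbb{P}(\{\tau^\infty\leqslant T\}\cap A_R)=0$ for every $R$. On $\{\tau^\infty\leqslant T\}$ the defining infimum is attained (as $K$ is closed and $s\mapsto(s,X_s)$ is continuous), hence $(t_0,X_{t_0})\in K$ for $t_0:=\tau^\infty$. On $A_R$ and for $|t-t_0|\leqslant 1$ one has $|(t,X_t)-(t_0,X_{t_0})|\leqslant(1+R^2)^{1/2}|t-t_0|^\alpha$, so $(t,X_t)\in B_{\varepsilon/4}(K)$, and thus $f_\varepsilon(t,X_t)=1$, whenever $|t-t_0|<\eta_\varepsilon:=\big(4(1+R^2)^{1/2}\big)^{-1/\alpha}\varepsilon^{1/\alpha}$ (taking $\varepsilon\leqslant 1$). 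Choosing a one-sided subinterval $J\subset[0,T]$ with $t_0\in\overline J$ and $|J|=\eta_\varepsilon$ (which fits inside $[0,T]$ once $\eta_\varepsilon\leqslant T/2$), we obtain, on $\{\tau^\infty\leqslant T\}\cap A_R$ and for $\varepsilon$ small,
\[ \sup_{t\leqslant T}\Big|\int_0^t f_\varepsilon(s,X_s)\,\mathd s\Big|=\int_0^T f_\varepsilon(s,X_s)\,\mathd s\geqslant|J|=\eta_\varepsilon. \]
Applying the second bound in property \ref{incompressibleEucild} with the test function $f_\varepsilon$ and a threshold slightly below $\eta_\varepsilon$, and then the volume estimate above, gives for every $\theta>0$
\[ \mathbb{P}(\{\tau^\infty\leqslant T\}\cap A_R)\leqslant\theta+\frac{M(\theta,T)}{\eta_\varepsilon}\,\|f_\varepsilon\|_{L^1_TL^1}\lesssim_R\theta+\varepsilon^{\delta}. \]
Letting $\varepsilon\to 0$ and then $\theta\to 0$ yields $\mathbb{P}(\{\tau^\infty\leqslant T\}\cap A_R)=0$; letting $R\to\infty$ and recalling the convention $\inf\emptyset=\infty$ completes the proof.

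I do not anticipate a genuine obstacle here: the argument is essentially a Kolmogorov-type dimension count, and the only points that require care are (i) the localisation on the almost-surely finite but unbounded seminorm $[X]_{C^\alpha}$, which is what turns the probabilistic bound \ref{incompressibleEucild} into a deterministic lower bound on the occupation integral, and (ii) the harmless bookkeeping of picking $J$ on the correct side of $t_0$ when $t_0$ is near an endpoint of $[0,T]$. This is consistent with the remark preceding the lemma that the statement should not be new, since it uses only Hölder regularity of $X$ and incompressibility.
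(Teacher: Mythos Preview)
Your proof is correct and follows essentially the same route as the paper's: localise on $\{\|X\|_{C^\alpha}\leqslant R\}$, use H\"older continuity to force an occupation-time lower bound of order $\varepsilon^{1/\alpha}$ in $B_\varepsilon(K)$, and close with the incompressibility estimate \ref{incompressibleEucild} against the volume bound $\tmop{Leb}(B_\varepsilon(K))\lesssim\varepsilon^{1/\alpha+\delta}$. Your use of the smooth cutoff $f_\varepsilon=1-g_\varepsilon$ in place of the indicator $1_{B_\varepsilon(K)}$ is a minor cosmetic improvement, since the incompressibility bound in Definition~\ref{defenergysolutionsEuclid} is stated only for $C^\infty_c$ test functions.
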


\begin{proof}
  Let $\omega \in \Omega$ such that $\tau^{\infty} (\omega) < \infty$, meaning
  that for some $t \in [0, T]$ it holds $(t, X_t (\omega)) \in K$. Then, still
  $(s, X_s (\omega)) \in B_{\varepsilon} (K)$ if
  \[ | t - s | < \left( \| X (\omega) \|_{C^{\alpha}}^{- 1}
     \frac{\varepsilon}{\sqrt{2}} \right)^{1 / \alpha}, | t - s | <
     \frac{\varepsilon}{\sqrt{2}}, \]
  meaning that for $\varepsilon > 0$ small enough
  \[ \tmop{Leb} \{ s \in [0, T] : (s, X_s (\omega)) \in B_{\varepsilon} (K) \}
     \geqslant \min \left\{ \left( \| X(\omega) \|_{C^{\alpha}}^{- 1}
     \frac{\varepsilon}{\sqrt{2}} \right)^{1 / \alpha},
     \frac{\varepsilon}{\sqrt{2}} \right\} . \]
  Therefore,
  \begin{equation}
    \tmop{Leb} \{ s \in [0, T] : (s, X (\omega)) \in B_{\varepsilon} (K) \}
    \gtrsim_{\| X (\omega) \|_{C^{\alpha}}} \varepsilon^{1 / \alpha} .
    \label{lowerboundLeb}
  \end{equation}
  We are done once we can show that for all $N > 0$,
  \begin{equation}
    \mathbb{P} (\{ \tau^{\infty} < \infty \} \cap \{ \| X \|_{C^{\alpha}} < N
    \}) = 0 . \label{normcondstat}
  \end{equation}
  By (\ref{lowerboundLeb}) we have
  \[ \{ \tau^{\infty} < \infty \} \cap \{ \| X \|_{C^{\alpha}} < N \} \subset
     \bigcap_{0 < \varepsilon < \varepsilon_0 (N)} \{\tmop{Leb} \{t\in[0,T]: (t,X_t)
     \in B_{\varepsilon} (K) \} \geqslant C (N) \varepsilon^{1 / \alpha}\}, \]
  for some constants $C (N), \varepsilon_0 (N)$. On the other hand by
  incompressibility for any $\kappa > 0$ there exists $M (\kappa)$ such that
  \begin{eqnarray*}
    \mathbb{P} (\tmop{Leb} \{ t \in [0, T] : (t, X_t ) \in
    B_{\varepsilon} (K) \} > \varepsilon^{1 / \alpha + \delta / 2}) & = &
    \mathbb{P} \left( \int_0^T 1_{B_{\varepsilon} (K)} (s, X_s) \mathd s >
    \varepsilon^{1 / \alpha + \delta / 2} \right)\\
    & \leqslant & \kappa + M (\kappa) \varepsilon^{\delta / 2},
  \end{eqnarray*}
  meaning that for all $\varepsilon$ small enough such that
  $\varepsilon^{\delta / 2} < C (N) \wedge \varepsilon_0 (N)$,
  \begin{eqnarray*}
    \mathbb{P} (\{ \tau^{\infty} < \infty \} \cap \{ \| X \|_{C^{\alpha}} < N
    \}) & \leqslant & \mathbb{P} (\tmop{Leb} \{ t \in [0, T] : (t, X_t ) \in
    B_{\varepsilon} (K) \}\geqslant C (N) \varepsilon^{1 / \alpha})\\
    & \leqslant & \mathbb{P} (\tmop{Leb} \{ t \in [0, T] : (t, X_t ) \in
    B_{\varepsilon} (K) \} > \varepsilon^{1 / \alpha + \delta / 2})\\
    & \leqslant & \kappa + M (\kappa) \varepsilon^{\delta / 2} .
  \end{eqnarray*}
  Letting first $\varepsilon \rightarrow 0$ and then $\kappa \rightarrow 0$ we
  indeed obtain indeed (\ref{normcondstat}).
\end{proof}

\begin{proof*}{Proof of Theorem \ref{maintheorem}.}
  Recall that
  \[ \rho^{\gamma} \ast u (t \wedge \tau^{\delta}, X_{t \wedge \tau^{\delta}})
     - \rho^{\gamma} \ast u (0, X_0) - \int_0^{t \wedge \tau^{\delta}}
     (\partial_t + \mathLaplace + b \cdummy \nabla) \rho^{\gamma} \ast u (s,
     X_s) \mathd s, t \in [0, T'], \]
  is a continuous martingale in the filtration generated by $X$. Moreover
  \begin{eqnarray}
    \int_0^{t \wedge \tau^{\delta}} (\partial_t + \mathLaplace + b \cdummy
    \nabla) \rho^{\gamma} \ast u (s, X_s) \mathd s & =_{\left( \ref{convint}
    \right)} & \int_0^{t \wedge \tau^{\delta}}  \nabla \cdummy
    (g_{\delta} A \nabla \rho^{\gamma} \ast u)  (s, X_s)  \nonumber\\
    &  & - \rho^{\gamma} \ast  \nabla \cdummy (g_{\delta} A
    \nabla u) (s, X_s) \mathd s \nonumber\\
    & \rightarrow 0,  \label{addfunclimit}
  \end{eqnarray}
  in $ucp$ w.r.t. $t$ since $ \nabla \cdummy (g_{\delta} A \nabla
  \rho^{\gamma} \ast u)  \rightarrow \nabla \cdummy (g_{\delta} A \nabla
   u), \rho^{\gamma} \ast  \nabla \cdummy (g_{\delta} A
    \nabla u) \rightarrow   \nabla \cdummy (g_{\delta} A
    \nabla u)$ in $L^2_T H^{- 1}$ which was shown
  in the proof of Lemma \ref{stoppedob}. The convergences $\rho^{\gamma} \ast u (0,
  X_0) \rightarrow u (0, X_0)$, $\rho^{\gamma} \ast u
  (\cdummy \wedge \tau^{\delta}, X_{\cdummy \wedge \tau^{\delta}}) \rightarrow
  u (\cdummy \wedge \tau^{\delta}, X_{\cdummy \wedge \tau^{\delta}})$ follow
  from Lemma \ref{stoppedob} as well.
  
  Hence, by {\cite[Proposition~IX.1.17]{Jacod2003}}, we obtain that
  \[ u (t \wedge \tau^{\delta}, X_{t \wedge \tau^{\delta}}) - u (0, X_0), t
     \geqslant 0, \]
  is a continuous local martingale which is a proper martingale since it is
  bounded by Lemma \ref{stoppedob}. Now we let $\delta \rightarrow 0$ and get
  that
  \[ u (t \wedge \tau^{\delta}, X_{t \wedge \tau^{\delta}}) \rightarrow u (t,
     X_t), \mathbb{P}- a.s., \]
  and in $L^p$ for any $1 \leqslant p < \infty$ by (\ref{stoppedident}) and
  since (\ref{stopinftyinfty}) holds. In particular
  \[ u (t, X_t) - u (0, X_0), t \in [0, T'], \]
  is a martingale, meaning
  \[ \mathbb{E} [v_0 (X_{T' })] =\mathbb{E} [u (T', X_{T' })] =\mathbb{E} [u
     (0, X_0)] =\mathbb{E} [v (T', X_0)] . \]
  This shows uniqueness of one-dimensional distributions and uniqueness of
  finite-dimensional distributions and the Markov property follow the same
  iterative standard argument as in {\cite{GraefnerPerkowski24}}.
  
  The second statement follows from Lemma \ref{relationdimHoelder} and the
  existence of energy solutions $X \in C^{\alpha}$ with $\alpha < \frac{p (1 /
  2 - 1 / q) - 1}{p}$ for any initial distribution $\mu \ll \tmop{Leb}$
  follows from the application of Kolmogorov's continuity theorem in the proof of Theorem 19 in {\cite{GraefnerPerkowski24}}.
\end{proof*}

\section{Weak convergence rates for the approximations}\label{ratessection}
We are interested in rates for the convergence $\mathbb{E} [f (X_t^n)]
\rightarrow \mathbb{E} [f (X_t)]$, where $f : \mathbb{R}^d \rightarrow \mathbb{R}$ is a suitable function and $X^n$ solves the approximated system
\begin{equation}
\mathd X^n_t = b^n (X^n_t) \mathd t + \sqrt{2} \mathd B_t, \label{approx}
\end{equation}
for $b^n = \rho^n \ast b, n \in \mathbb{N}$ with $\rho^n=n^d\rho(n\cdot)$ and $\rho$ being a smooth mollifier with $\text{supp}\rho\subset B_1(0)$. In particular, we consider
time-independent drift $b$.
\\
\\
We work with inhomogeneous Besov spaces $B^s_{p,q}$, $s\in\mathbb{R}$, $p,q\in [1,\infty]$ with norm \[
\|u \|_{B^{s }_{p, q}}^q=\sum_{j=-1}^\infty 2^{jsq}\|\Delta_j u\|_{L^p}^q,
\]
with the usual interpretation for $q=\infty$ and where $\Delta_j u=\phi_j\ast u$ is the $j$th Littlewood-Paley block for $\phi_j=\mathcal{F}^{-1}\varphi_j$ and the dyadic partition of unity $(\varphi_j)_{j\geqslant -1}$.
\\
The following rather well known Lemma is the source of our rates and says that convergence of mollifications of a distribution to the original object should have a rate if one measures with a weaker norm.

\begin{lemma}
\label{mollificationLemma}
  Let $s \in \mathbb{R}$, $p, q \in [1, \infty]$, let $b \in B^s_{p, q}$
  and consider $b^n = \rho^n \ast b, n \in \mathbb{N}$. Then, for any
  $\alpha>0$ it holds
  \[ \| b - b^n \|_{B^{s - \alpha}_{p, q}} \lesssim n^{- \alpha
     } \| b \|_{B^s_{p, q}} . \]
\end{lemma}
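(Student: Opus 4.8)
The plan is the textbook Littlewood--Paley argument: estimate each dyadic block of $b-b^n$ separately, exploiting that convolution with $\rho^n$ acts as a near-identity on frequencies $\lesssim n$, while losing $\alpha$ derivatives automatically yields the factor $n^{-\alpha}$ on frequencies $\gtrsim n$. The whole proof rests on the per-block estimate
\[ \|\Delta_j(b-b^n)\|_{L^p}\lesssim\min\{1,\,2^j/n\}\,\|\Delta_j b\|_{L^p},\qquad j\geqslant-1 . \]

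To establish this, I would use that $\rho$ is a mollifier supported in $B_1(0)$ with $\int\rho=1$, so that $b-b^n=\int_{B_1(0)}(b-b(\cdot-y/n))\,\rho(y)\,\mathrm{d}y$ in $\mathcal{S}'$; applying $\Delta_j$, which commutes with translations, together with Minkowski's integral inequality gives
\[ \|\Delta_j(b-b^n)\|_{L^p}\leqslant\int_{B_1(0)}\|\Delta_j b-(\Delta_j b)(\cdot-y/n)\|_{L^p}\,|\rho(y)|\,\mathrm{d}y . \]
The translate difference is bounded in two ways: trivially by $2\|\Delta_j b\|_{L^p}$, and --- by the fundamental theorem of calculus together with Bernstein's inequality --- by $|h|\,\|\nabla\Delta_j b\|_{L^p}\lesssim 2^j|h|\,\|\Delta_j b\|_{L^p}$; since $|y/n|\leqslant1/n$ on $\mathrm{supp}\,\rho$, keeping the smaller of the two and integrating yields the claim, with the inhomogeneous block $j=-1$ (Fourier-supported in a fixed ball) behaving identically. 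One could instead write $\Delta_j(b-b^n)=K_{j,n}*\Delta_j b$ with $\widehat{K_{j,n}}(\xi)=\tilde\varphi_j(\xi)(1-\hat\rho(\xi/n))$ for a fattened cutoff $\tilde\varphi_j$, rescale, and bound $\|K_{j,n}\|_{L^1}$ via the derivatives of the smooth, compactly Fourier-supported symbol, but the translation route is lighter.

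To conclude, note that for $\alpha\in(0,1]$ one has $\min\{1,x\}\leqslant x^\alpha$ for all $x\geqslant0$, so the per-block estimate gives $\|\Delta_j(b-b^n)\|_{L^p}\lesssim(2^j/n)^\alpha\|\Delta_j b\|_{L^p}$ for every $j$, whence
\[ \|b-b^n\|_{B^{s-\alpha}_{p,q}}^q=\sum_{j\geqslant-1}2^{j(s-\alpha)q}\|\Delta_j(b-b^n)\|_{L^p}^q\lesssim n^{-\alpha q}\sum_{j\geqslant-1}2^{jsq}\|\Delta_j b\|_{L^p}^q=n^{-\alpha q}\|b\|_{B^s_{p,q}}^q , \]
with the usual interpretation as a supremum when $q=\infty$; this is already the statement in the range one actually needs. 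For $\alpha>1$ I would split the sum at $2^j\simeq n$: on frequencies $\gtrsim n$ the trivial bound $\|\Delta_j(b-b^n)\|_{L^p}\lesssim\|\Delta_j b\|_{L^p}$ combined with $2^{-j\alpha}\lesssim n^{-\alpha}$ already produces the $n^{-\alpha}$, while on frequencies $\lesssim n$ one repeats the argument above with a Taylor expansion of $b(\cdot-y/n)$ to order $\lceil\alpha\rceil$, invoking the corresponding order/moment properties of $\rho$. The statement is elementary and I do not anticipate a genuine obstacle --- the only points asking for a little care are choosing the translation representation that makes the per-block gain transparent, handling the block $j=-1$ uniformly, and the endpoint $q=\infty$.
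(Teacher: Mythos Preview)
For $\alpha\in(0,1]$ your argument is correct and is the paper's proof in slightly different clothing: the paper factors through a Young-type convolution inequality, reducing to $\|\delta_0-\rho^n\|_{B^{-\alpha}_{1,\infty}}$, and then bounds the blocks $\|\phi_j-\rho^n\ast\phi_j\|_{L^1}$ by scaling and a translation/H\"older estimate on $\phi_0$; this is exactly your per-block gain $\min\{1,2^j/n\}$ obtained via Bernstein, just applied to $\phi_0$ instead of $\Delta_j b$.

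For $\alpha>1$ there is a genuine gap. The ``order/moment properties of $\rho$'' you plan to invoke are not part of the hypotheses --- $\rho$ is only a smooth mollifier with $\int\rho=1$ and support in $B_1(0)$ --- and without vanishing higher moments the Taylor step cannot improve the low-frequency rate past $n^{-1}$. In fact the inequality is \emph{false} for $\alpha>1$ and generic $\rho$: take $b$ with Fourier support in $\{|\xi|\sim1\}$ and a non-symmetric $\rho$ (so $\hat\rho'(0)\neq0$); then $\|b-b^n\|_{B^{s-\alpha}_{p,q}}\simeq\|b-b^n\|_{L^p}\sim n^{-1}$ for every $\alpha$, while $\|b\|_{B^s_{p,q}}\simeq\|b\|_{L^p}$. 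The paper's own reduction (``apply the case $\alpha\leqslant1$ several times'') has the same defect, since iterating $\|f-\rho^n\ast f\|_{B^{t-1}_{p,q}}\lesssim n^{-1}\|f\|_{B^t_{p,q}}$ in $t$ does not compound the power of $n$. This does not affect the paper's applications, which only ever use the lemma with $\alpha=\beta<1-\tfrac{2}{p}-\gamma<1$; the clean fix is to restrict the statement to $\alpha\in(0,1]$.
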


The following Lemma now allows to obtain rates for the convergence $b^n
\cdummy \nabla \rightarrow b \cdummy \nabla$ w.r.t. the regularities that are
given to us by our setting. We refer to {\cite{vanZuijlen22}} for the definition of para- and
resonant products $\varolessthan,\varogreaterthan,\odot$.

\begin{lemma}
  \label{driftinterpolboundlemma}Let $b \in B^{- \gamma}_{p, \infty} (\mathbb{R}^d ,
  \mathbb{R}^d)$ be divergence-free with $p > \frac{2}{1 - \gamma}, \gamma \in
  [0, 1), p \in (2, \infty)$. Let furthermore $u \in L^{\infty}  \cap H^1
  $. Then we have that
  \[ b \cdummy \nabla u := b \varolessthan \nabla u + b
     \varogreaterthan \nabla u + \nabla \cdummy (b \odot u), \]
  satisfies for all $r \in \left( \frac{2 p}{p + 2}, \frac{2 p}{p \gamma + 2}
  \right)$ the bound
  \begin{equation}
    \| b \cdummy \nabla u \|_{B^{- 1}_{r, 2}} \lesssim \| b \|_{B^{-
    \gamma}_{p, \infty} } \| u \|_{L^{\infty} \cap H^1},
    \label{driftinterpolbound}
  \end{equation}
  where
  \[ \| u \|_{L^{\infty} \cap H^1} = \max \{ \| u \|_{L^{\infty}},
     \| u \|_{H^1} \} . \]
\end{lemma}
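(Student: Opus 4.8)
The plan is to bound each of the three summands of $b\cdot\nabla u$ separately in $B^{-1}_{r,2}$ by Littlewood--Paley estimates, the guiding idea being to \emph{interpolate the factor built from $u$ between $L^\infty$ and $H^1$} so as to match the integrability exponent $r$ to that of $b$. Fix $r$ in the admissible range, set
\[
\theta := 1 - 2\Big(\tfrac1r - \tfrac1p\Big), \qquad \tfrac1{p_2} := \tfrac{1-\theta}{2},
\]
so that $\tfrac1r = \tfrac1p + \tfrac1{p_2}$, and observe that the endpoint conditions $\tfrac1r < \tfrac1p + \tfrac12$ and $\tfrac1r > \tfrac1p + \tfrac\gamma2$ are exactly $0<\theta<1-\gamma$, with then $p_2\in(2,\infty)$. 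The elementary ingredients are: the $L^p$-block bounds for Bony's para- and resonant products (see {\cite{vanZuijlen22}}), Bernstein's inequality, $\|\Delta_j b\|_{L^p}\lesssim 2^{j\gamma}\|b\|_{B^{-\gamma}_{p,\infty}}$ and $\|S_{j-1}b\|_{L^p}\lesssim 2^{j\gamma}\|b\|_{B^{-\gamma}_{p,\infty}}$ (with a harmless factor $j$ when $\gamma=0$), the blockwise interpolation $\|w\|_{L^{p_2}}\leqslant\|w\|_{L^2}^{1-\theta}\|w\|_{L^\infty}^{\theta}$, and the facts $\|\Delta_j\nabla u\|_{L^\infty}\lesssim 2^j\|u\|_{L^\infty}$ and $(\|\Delta_j\nabla u\|_{L^2})_j\in\ell^2$ with $\ell^2$-norm $\lesssim\|u\|_{H^1}$ (and their $S_{j-1}$ analogues).

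For the low--high paraproduct $b\varolessthan\nabla u=\sum_j S_{j-1}b\,\Delta_j\nabla u$, up to finitely many neighbouring blocks one gets
\[
2^{-j}\,\|\Delta_j(b\varolessthan\nabla u)\|_{L^r} \;\lesssim\; 2^{j(\gamma+\theta-1)}\,\|b\|_{B^{-\gamma}_{p,\infty}}\,\|u\|_{L^\infty}^{\theta}\, f_j^{\,1-\theta},\qquad f_j:=\|\Delta_j\nabla u\|_{L^2};
\]
squaring, summing in $j$, and applying H{\"o}lder in $j$ with exponents $\tfrac1\theta,\tfrac1{1-\theta}$ splits off $\sum_j 2^{2j(\gamma+\theta-1)/\theta}<\infty$ (convergent since $\gamma+\theta-1<0$) from $\big(\sum_j f_j^2\big)^{1-\theta}=\|\nabla u\|_{L^2}^{2(1-\theta)}$, giving $\|b\varolessthan\nabla u\|_{B^{-1}_{r,2}}\lesssim\|b\|_{B^{-\gamma}_{p,\infty}}\|u\|_{L^\infty\cap H^1}$. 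The high--low paraproduct $b\varogreaterthan\nabla u=\sum_j S_{j-1}(\nabla u)\,\Delta_j b$ is treated the same way, now using $\|S_{j-1}\nabla u\|_{L^{p_2}}\lesssim\|\nabla u\|_{L^2}^{1-\theta}(2^j\|u\|_{L^\infty})^{\theta}$ and $\|\Delta_j b\|_{L^p}\lesssim 2^{j\gamma}\|b\|_{B^{-\gamma}_{p,\infty}}$; the resulting $j$-sum converges directly because $\gamma+\theta-1<0$ and $(2^{-j\gamma}\|\Delta_j b\|_{L^p})_j$ is bounded.

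The resonant term is where the divergence-free hypothesis is genuinely used: having rewritten it as $\nabla\cdot(b\odot u)$, it suffices to show $b\odot u\in B^0_{r,2}$. From the resonance block estimate and the blockwise interpolation,
\[
\|\Delta_\ell(b\odot u)\|_{L^r} \;\lesssim\; \sum_{j\geqslant \ell-N}\|\Delta_j b\|_{L^p}\,\|\widetilde{\Delta}_j u\|_{L^{p_2}} \;\lesssim\; \|u\|_{L^\infty}^{\theta}\sum_{j\geqslant \ell-N} 2^{j(\gamma-1+\theta)}\, d_j\, e_j^{\,1-\theta},
\]
where $\widetilde{\Delta}_j:=\sum_{|k-j|\leqslant1}\Delta_k$, $d_j:=2^{-j\gamma}\|\Delta_j b\|_{L^p}\in\ell^\infty$ and $e_j:=2^{j}\|\Delta_j u\|_{L^2}\in\ell^2$, with norms controlled by $\|b\|_{B^{-\gamma}_{p,\infty}}$ and $\|u\|_{H^1}$. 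Since $\gamma-1+\theta<0$ this tail sum is a geometrically weighted convolution, so $\|\Delta_\ell(b\odot u)\|_{L^r}\lesssim\|u\|_{L^\infty}^\theta\,2^{\ell(\gamma-1+\theta)}G_\ell$ with $(G_\ell)\in\ell^{2/(1-\theta)}$ by Young's inequality (using $\|d_j e_j^{1-\theta}\|_{\ell^{2/(1-\theta)}}\leqslant\|d\|_{\ell^\infty}\|e\|_{\ell^2}^{1-\theta}$); squaring, summing in $\ell$, and again using H{\"o}lder with $\tfrac1\theta,\tfrac1{1-\theta}$ to split off $\sum_\ell 2^{2\ell(\gamma-1+\theta)/\theta}<\infty$ yields $\|b\odot u\|_{B^0_{r,2}}\lesssim\|b\|_{B^{-\gamma}_{p,\infty}}\|u\|_{L^\infty}^\theta\|u\|_{H^1}^{1-\theta}$, and hence the same bound for $\nabla\cdot(b\odot u)$ in $B^{-1}_{r,2}$. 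Adding the three contributions and bounding $\|u\|_{L^\infty}^\theta\|u\|_{H^1}^{1-\theta}\leqslant\|u\|_{L^\infty\cap H^1}$ proves (\ref{driftinterpolbound}). The only delicate point is the resonant term, where one must carefully balance the $\ell^\infty$-summability of the frequencies of $b$ against the $\ell^2$-summability of those of $u$ through the exponent $\theta$; the two paraproducts are routine.
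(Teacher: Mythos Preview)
Your argument is correct and follows the same strategy as the paper's proof: interpolate $u$ blockwise between $L^\infty$ and $H^1$, then run paraproduct estimates with the Besov norm of $b$. The paper carries this out more economically by first recording the single estimate $\|u\|_{B^{2/q}_{q,\infty}}\lesssim\|u\|_{L^\infty\cap H^1}$ (your $q$ is $p_2$, your $1-\theta$ is $2/q$) and then invoking the off-the-shelf paraproduct and resonant bounds (Theorems~27.5 and~27.10 of \cite{vanZuijlen22}) to obtain all three pieces in $B^{-1-\gamma+2/q}_{r,\infty}$, which embeds into $B^{-1}_{r,2}$ since $-\gamma+2/q>0$; you instead unpack these theorems by hand and sum directly into $\ell^2$ via H{\"o}lder in the block index. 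The advantage of the paper's route is brevity and modularity; the advantage of yours is that it is self-contained and, incidentally, handles the endpoint $\gamma=0$ transparently (the extra logarithmic factor in $\|S_{j-1}b\|_{L^p}$ is absorbed by the strict inequality $\theta<1$), whereas the cited paraproduct estimate $\|b\varolessthan\nabla u\|_{B^{-1-\gamma+2/q}_{r,\infty}}\lesssim\|b\|_{B^{-\gamma}_{p,\infty}}\|\nabla u\|_{B^{-1+2/q}_{q,\infty}}$ in its standard form requires $-\gamma<0$.
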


\begin{proof}
  For $q \in [2, \infty]$ we use the following interpolation bound, which is
  expected to exist somewhere in the literature but was not found since the $L^\infty$
  case is often excluded. We have,
  \begin{eqnarray*}
    \| \mathLaplace_j u \|_{L^q} & = &\left( \int | \mathLaplace_j u |^q
    \right)^{1 / q}\\
    & \leqslant &\| \mathLaplace_j u \|_{L^{\infty}}^{1 - 2 / q} \|
    \mathLaplace_j u \|^{2 / q}_{L^2}\\
    & \lesssim &\| u \|_{L^{\infty}}^{1 - 2 / q} 2^{- 2 j / q} \| u \|^{2 /
    q}_{H^1} .
  \end{eqnarray*}
  Here, we used that $\| \phi_j \|_{L^1} = \| \phi_0 \|$, so that
  \[ \| u \|_{B^{2 / q}_{q, \infty}} \lesssim \| u \|_{L^{\infty} \cap H^1} .
  \]
  With Theorem 27.10 from {\cite{vanZuijlen22}}, for
  $q \geqslant 2$ small enough such that
  \[ \frac{2}{q} > \gamma, \]
  we have
  \[ \| \nabla \cdummy (b \odot u) \|_{B^{- 1 - \gamma + \frac{2}{q}}_{r,
     \infty}} \lesssim \| b \|_{B^{- \gamma}_{p, \infty}} \| u \|_{B^{2 /
     q}_{q, \infty}}, \]
  where
  \[ \frac{1}{r} = \frac{1}{p} + \frac{1}{q} \leqslant 1, \]
  and the last inequality follows from $q \geqslant 2$ and $p > \frac{2}{1 -
  \gamma} \geqslant 2$. Similarly, with Theorems 27.5 from {\cite{vanZuijlen22}} we bound
  \[ \| b \varolessthan \nabla u \|_{B^{- 1 - \gamma + \frac{2}{q}}_{r,
     \infty}} \lesssim \| b \|_{B^{- \gamma}_{p, \infty}} \| \nabla u \|_{B^{-
     1 + 2 / q}_{q, \infty}} \lesssim \| b \|_{B^{- \gamma}_{p, \infty}} \| u
     \|_{B^{2 / q}_{q, \infty}}, \]
  and assuming $q > 2$
  \[ \| b \varogreaterthan \nabla u \|_{B^{- 1 - \gamma + \frac{2}{q}}_{r,
     \infty}} \lesssim \| b \|_{B^{- \gamma}_{p, \infty}} \| \nabla u \|_{B^{-
     1 + 2 / q}_{q, \infty}} \lesssim \| b \|_{B^{- \gamma}_{p, \infty}} \| u
     \|_{B^{2 / q}_{q, \infty}} . \]
  Overall, we deduce that
  \[ \| b \cdummy \nabla u \|_{B^{- 1}_{r, 2}} \lesssim \| b \cdummy \nabla u
     \|_{B^{- 1 - \gamma + \frac{2}{q}}_{r, \infty}} \lesssim \| b \|_{B^{-
     \gamma}_{p, \infty}} \| u \|_{B^{2 / q}_{q, \infty}}, \]
  where $\frac{1}{r} = \frac{1}{p} + \frac{1}{q}$ and $q \in \left( 2,
  \frac{2}{\gamma} \right)$, meaning $r \in \left( \frac{2 p}{p + 2}, \frac{2
  p}{p \gamma + 2} \right)$.
\end{proof}

We will also need the following Krylov-type \textit{It$\hat{o}$ trick} bound from Lemma 16 in {\cite{GraefnerPerkowski24}}. Our version is slightly more general since it allows initial densities $\eta$ with finite integrability index; but the proof follows the same steps as the one in the $L^\infty$-case in {\cite{GraefnerPerkowski24}}.
\begin{lemma}[It{\^o} trick]
  \label{ItotricklemmaEuclid}Let $b \in C (\mathbb{R}_+ ; C^{\infty}_b
  (\mathbb{R}^d))$ and let $X : \mathbb{R}_+ \rightarrow \mathbb{R}^d$ solve
  \[ \mathd X_t = b (t, X_t) \mathd t + \sqrt{2} \mathd B_t, \]
  where $B$ is a Brownian motion, $\nabla \cdummy b \equiv 0$ and $b, \nabla b
  \in L^{\infty}_{\tmop{loc}} L^s$ for some $s \in [1, \infty)$. Assume that
  $X (0) \sim \mu$ with $\eta \assign \frac{d \mu}{d
  \tmop{Leb}_{\mathbb{R}^d}} \in L^{\kappa} (\mathbb{R}^d)$ for some $\kappa
  \in (1, \infty]$. Let $\kappa' \in [1, \infty)$ such that $\frac{1}{\kappa}
  + \frac{1}{\kappa'} = 1$. Then, for all $T > 0$ and $f \in C^{\infty}_c ([0,
  T] \times \mathbb{R}^d, \mathbb{R})$
  \begin{equation}
    \mathbb{E} \left[ \sup_{0 \leqslant t \leqslant T} \left| \int_0^t
    \mathLaplace f (s, X_s) \mathd s \right|^p \right] \lesssim \| \eta
    \|_{L^{\kappa} (\mathbb{R}^d)} T^{p \left( \frac{1}{2} - \frac{1}{q}
    \right)} \| \nabla f \|^p_{L_T^q L^{p \kappa'} (\mathbb{R}^d)},
    \label{ItotrickEuclid}
  \end{equation}
  for all $p \in \left[ \frac{2}{\kappa'}, \infty \right)$ and $q \in [2,
  \infty]$. The implicit constant on the right hand side is independent of
  $b$.
\end{lemma}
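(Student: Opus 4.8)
The plan is to deduce the statement from the $L^{\infty}$-density case, i.e.\ from {\cite[Lemma 16]{GraefnerPerkowski24}} itself, by a duality argument; the new content is precisely this reduction together with the bookkeeping of the exponents, while the core It\^o-trick estimate is imported (or, equivalently, re-run) verbatim. Fix $T>0$ and $f\in C^{\infty}_c([0,T]\times\mathbb{R}^d)$, write $\mathbb{P}^x$ for the law of the (unique, strong) solution started at $x$ --- well-defined since $b$ is smooth and bounded, so that $\mathbb{P}^{\mu}=\int\mathbb{P}^x\,\mu(\mathd x)$ --- and set $F\assign\sup_{0\leqslant t\leqslant T}\big|\int_0^t\mathLaplace f(s,X_s)\,\mathd s\big|^p$. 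By H\"older with the conjugate pair $(\kappa,\kappa')$ and then Jensen's inequality (using $\kappa'\geqslant 1$),
\[
   \mathbb{E}^{\mu}[F]=\int\mathbb{E}^x[F]\,\eta(x)\,\mathd x\leqslant\|\eta\|_{L^{\kappa}}\,\Big(\int\big(\mathbb{E}^x[F]\big)^{\kappa'}\mathd x\Big)^{1/\kappa'}\leqslant\|\eta\|_{L^{\kappa}}\,\Big(\int\mathbb{E}^x\big[F^{\kappa'}\big]\,\mathd x\Big)^{1/\kappa'}.
\]
Here $F^{\kappa'}=\sup_{t\leqslant T}\big|\int_0^t\mathLaplace f(s,X_s)\,\mathd s\big|^{r}$ with $r\assign p\kappa'$, and $r\geqslant 2$ holds precisely because $p\geqslant 2/\kappa'$. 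Applying {\cite[Lemma 16]{GraefnerPerkowski24}} --- whose hypotheses require exactly an exponent $\geqslant 2$ --- with the bounded density $\eta_R\assign|B_R (0)|^{-1}1_{B_R (0)}$ at exponent $r$ gives, with a constant independent of $b$ and of $R$,
\[
  \int_{B_R (0)}\mathbb{E}^x\big[F^{\kappa'}\big]\,\mathd x=|B_R (0)|\,\mathbb{E}^{\eta_R}\big[F^{\kappa'}\big]\lesssim_{r}|B_R (0)|\cdot\|\eta_R\|_{L^{\infty}}\,T^{r(1/2-1/q)}\,\|\nabla f\|^{r}_{L^q_T L^{r}}=T^{r(1/2-1/q)}\,\|\nabla f\|^{r}_{L^q_T L^{r}},
\]
so letting $R\to\infty$ yields $\int\mathbb{E}^x[F^{\kappa'}]\,\mathd x\lesssim_r T^{r(1/2-1/q)}\|\nabla f\|^{r}_{L^q_T L^{r}}$. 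Substituting into the first display and using $r=p\kappa'$, $r/\kappa'=p$, $L^{r}=L^{p\kappa'}$ produces exactly (\ref{ItotrickEuclid}).

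For completeness, the estimate being imported is the one proved in {\cite{GraefnerPerkowski24}} by the It\^o trick: for an $L^{\infty}$ initial density one uses that $\nabla\cdummy b\equiv 0$ makes $\tmop{Leb}$ invariant and the Haussmann--Pardoux time-reversal of $X$ solve the SDE with drift $-b$ (the $\nabla\log\rho$ correction vanishing); adding the forward and backward It\^o formulae for $f(s,X_s)$, the $\partial_s f$ and $b\cdummy\nabla f$ contributions cancel pairwise, leaving a Lyons--Zheng-type identity that writes $\int_0^t\mathLaplace f(s,X_s)\,\mathd s$ as a forward plus a backward martingale, each with bracket $2\int_0^T|\nabla f(s,X_s)|^2\,\mathd s$; Doob's and the Burkholder--Davis--Gundy inequalities, the bound on the one-time marginal density, Minkowski's integral inequality (this is where the exponent being $\geqslant 2$ is used) and H\"older in time then give the bound, with all constants independent of $b$. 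Thus one may equivalently re-run that proof with $r$ in place of $p$ instead of citing it.

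I expect the only genuine subtlety to be the reduction step itself: checking that one may disintegrate $\mathbb{P}^{\mu}=\int\mathbb{P}^x\,\mu(\mathd x)$, apply the H\"older--Jensen bound above, and exhaust by balls while keeping every constant $b$-uniform --- and, correlatively, noting that $p\geqslant 2/\kappa'$ is exactly the threshold that makes $r=p\kappa'\geqslant 2$, so that {\cite[Lemma 16]{GraefnerPerkowski24}} is applicable at the required exponent (the condition collapsing to the familiar $p\geqslant 2$ when $\kappa=\infty$, $\kappa'=1$). Everything downstream of this is the $L^{\infty}$-density argument of {\cite{GraefnerPerkowski24}} unchanged.
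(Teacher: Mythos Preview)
Your reduction argument is correct: the disintegration $\mathbb{P}^{\mu}=\int\mathbb{P}^{x}\,\mu(\mathd x)$, H\"older in $x$ with the pair $(\kappa,\kappa')$, Jensen to pull the power $\kappa'$ inside $\mathbb{E}^{x}$, and then exhaustion by balls with the uniform density $\eta_R$ all go through, and the cancellation $|B_R(0)|\cdot\|\eta_R\|_{L^{\infty}}=1$ is exactly what makes the limit $R\to\infty$ finite. The observation that $p\geqslant 2/\kappa'$ is equivalent to $r=p\kappa'\geqslant 2$, so that the cited $L^{\infty}$ lemma applies at exponent $r$, is the right bookkeeping.

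This is a genuinely different route from what the paper indicates. The paper does not spell out a proof but states that one ``follows the same steps as the one in the $L^{\infty}$-case'', i.e.\ one re-runs the forward/backward Lyons--Zheng decomposition and BDG/H\"older chain directly under the $L^{\kappa}$ density, tracking how the extra H\"older splitting in the step $\mathbb{E}[\int_0^T|\nabla f|^2(s,X_s)\,\mathd s]^{r/2}\lesssim\|\eta\|_{L^{\kappa}}\cdots$ produces the exponent $p\kappa'$ on $\nabla f$. Your approach instead treats the $L^{\infty}$ result as a black box and lifts it by duality; this is cleaner and more modular, and it makes transparent why the constraint $p\geqslant 2/\kappa'$ appears. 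One small inaccuracy in your ``for completeness'' paragraph: for a general bounded (non-constant) initial density the Haussmann--Pardoux correction $\nabla\log\rho_t$ does not vanish identically --- what is actually used in \cite{GraefnerPerkowski24} is that $\nabla\cdummy b\equiv 0$ propagates the $L^{\infty}$ bound on the density so that the reversed drift is still controlled. This does not affect your reduction, since you only cite the lemma rather than reprove it.
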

Inverting the Laplacian, we obtain, under the same assumptions as in the previous Lemma,
\begin{eqnarray*}
\mathbb{E} \left[ \sup_{0 \leqslant t \leqslant T} \left| \int_0^t
    f (s, X_s) \mathd s \right|^p \right]&\lesssim &\mathbb{E} \left[ \sup_{0 \leqslant t \leqslant T} \left| \int_0^t
    \Delta (1-\Delta)^{-1}f (s, X_s) \mathd s \right|^p \right]\\
    &&+\mathbb{E} \left[ \sup_{0 \leqslant t \leqslant T} \left| \int_0^t
    (1-\Delta)^{-1}f (s, X_s) \mathd s \right|^p \right] .
\end{eqnarray*}
Since $\nabla\cdot b=0$ we have that $\|\eta_t \|_{L^\kappa}\leqslant \|\eta \|_{L^\kappa}$ where $\eta_t$ is the density of $\text{law}(X_t)$ w.r.t. $\text{Leb}$ at time $t$. Thus, with Jensen's inequality and $p\geqslant 1$ and using the continuous embedding $B^0_{r,2}\subset L^r$ for finite $r\geqslant 2$,
\[
\mathbb{E} \left[ \sup_{0 \leqslant t \leqslant T} \left| \int_0^t
    (1-\Delta)^{-1}f (s, X_s) \mathd s \right|^p \right]\lesssim \|\eta \|_{L^\kappa}T^{p-1}\|f\|_{L_T^1 B^{-2}_{\kappa 'p,2}}^p .
\]
In total
\begin{equation}
\label{usefulItotrick}
\mathbb{E} \left[ \sup_{0 \leqslant t \leqslant T} \left| \int_0^t
    f (s, X_s) \mathd s \right|^p \right]\lesssim_T \|\eta \|_{L^\kappa}\|f\|_{L_T^q B^{-1}_{\kappa 'p,2}}^p .
\end{equation}

Now we are in the position to derive weak convergence rates.

\begin{theorem}
\label{ratewithoutlocalsingularities}
  Let $b \in B^{- \gamma}_{p, \infty}, \nabla \cdummy b = 0$ where $\gamma
  \in [0, 1), p \in (2, \infty)$ and $p > \frac{2}{1 - \gamma}$. Let $\mu \ll
  \tmop{Leb}$, let $X$ be the unique energy solution to (\ref{mainSDE}) with
  initial distribution $\mu$ and denote by $X^n$ the corresponding solutions
  to the approximated system (\ref{approx}). Let $\beta \in \left[ 0, 1 -
  \frac{2}{p} - \gamma \right)$ and assume that $\eta \assign \frac{\mathd
  \mu}{\mathd \tmop{Leb}} \in L^q$ with $q \in \left( \frac{2 p}{(2 -
  \beta - \gamma) p - 2}, \infty \right]$. Then, for any $f \in L^2 \cap
  L^{\infty}$ it holds that
  \[ \sup_{t \in [0, T]}  | \mathbb{E} [f (X_t) - f (X_t^n)] | \lesssim_T \max
     \{ 1, \| \eta \|_{L^q} \} {{n^{- \beta}} }  \| b \|_{B^{- \gamma}_{p,
     \infty}} \| f \|_{L^2 \cap L^{\infty}} . \]
\end{theorem}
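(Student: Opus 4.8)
The plan is a weak-error expansion based on the Kolmogorov backward equation, carried out at the (low) regularity of energy solutions, with the It\^o trick (\ref{usefulItotrick}) playing the role that a second-moment estimate plays in the smooth case. Fix $t=T'\in[0,T]$. Write $b=\mathbf b(A)$ with $A\in\mathcal A$ (time-independent here) and let $v$ solve $\partial_s v=\mathLaplace v+\nabla\cdummy(A\nabla v)$ on $[0,T']$ with $v(0)=f$. For $f\in L^2\cap L^\infty$ this is produced from Lemma \ref{PDEregularity} by approximating $f$ in $L^2$ by Schwartz functions bounded by $\|f\|_{L^\infty}$; the a priori bounds $\|v\|_{L^\infty_TL^\infty}\lesssim\|f\|_{L^\infty}$ (maximum principle) and $\|v\|_{L^2_{T'}H^1}\lesssim\|f\|_{L^2}$ (testing the equation against $v$, the drift term dropping by antisymmetry of $A$) survive the limit. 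Set $u:=v(T'-\cdummy)$, so that $u$ solves the backward equation $(\partial_s+\mathLaplace+b\cdummy\nabla)u=0$ on $[0,T']$ with $u(T')=f$ and, uniformly in $T'\le T$,
\[
\|u\|_{L^2_{T'}(L^\infty\cap H^1)}\lesssim_T\|f\|_{L^2\cap L^\infty}.
\]

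The first and central step is the weak-error identity
\[
\mathbb E[f(X_{T'})]-\mathbb E[f(X^n_{T'})]=-\,\mathbb E\Big[\int_0^{T'}(b^n-b)\cdummy\nabla u\,(s,X^n_s)\,\mathd s\Big].
\]
I would obtain it by using the space-time mollifications $u^\gamma=\rho^\gamma\ast u$ as test functions and letting $\gamma\to0$, following the scheme around (\ref{addfunclimit}) and in Lemma \ref{stoppedob}, but without the stopping time. For the energy solution $X$: the martingale property \ref{martingalepropertyEuclid} for $u^\gamma$, combined with $(\partial_s+\mathLaplace)u^\gamma=-\rho^\gamma\ast(b\cdummy\nabla u)$ and the fact that $b\cdummy\nabla u^\gamma-\rho^\gamma\ast(b\cdummy\nabla u)\to0$ in $L^2_TH^{-1}$ (splitting $A=A_1+A_2\in\mathcal A$ as in the proof of Lemma \ref{PDEregularity}), shows that $u(s,X_s)-u(0,X_0)$ is a bounded martingale, whence $\mathbb E[f(X_{T'})]=\mathbb E[u(0,X_0)]$. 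For $X^n$, which is a classical solution of (\ref{approx}) with a smooth, divergence-free, bounded ($L^p$) drift: It\^o's formula for $u^\gamma$ yields the same relation with the additional term $\int_0^{s}(b^n-b)\cdummy\nabla u^\gamma(r,X^n_r)\,\mathd r$, and letting $\gamma\to0$ — using (\ref{usefulItotrick}) for $X^n$ to ensure $L^1$-convergence of the relevant additive functionals, and $\rho^\gamma\ast f\to f$ a.e.\ boundedly together with $\mathrm{law}(X^n_{T'})\ll\mathrm{Leb}$ — gives $\mathbb E[f(X^n_{T'})]=\mathbb E[u(0,X^n_0)]+\mathbb E[\int_0^{T'}(b^n-b)\cdummy\nabla u(s,X^n_s)\,\mathd s]$. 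Since $X_0$ and $X^n_0$ both have law $\mu$, subtraction produces the identity.

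For the residual, note that $\nabla\cdummy b^n=0$ forces the density $\eta^n_s$ of $\mathrm{law}(X^n_s)$ to satisfy $\|\eta^n_s\|_{L^q}\le\|\eta\|_{L^q}$, so (\ref{usefulItotrick}) applies to $X^n$ with density index $q$. For $m:=\max\{1,2/q'\}$ (with $1/q+1/q'=1$), Jensen's inequality and (\ref{usefulItotrick}) with its time-integrability exponent taken equal to $2$ give
\[
\Big|\mathbb E\Big[\int_0^{T'}(b^n-b)\cdummy\nabla u\,(s,X^n_s)\,\mathd s\Big]\Big|\lesssim_T\|\eta\|_{L^q}^{1/m}\,\big\|(b^n-b)\cdummy\nabla u\big\|_{L^2_TB^{-1}_{q'm,2}},
\]
which requires $q'm\ge2$. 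Lemma \ref{mollificationLemma} gives $\|b^n-b\|_{B^{-\gamma-\beta}_{p,\infty}}\lesssim n^{-\beta}\|b\|_{B^{-\gamma}_{p,\infty}}$, and Lemma \ref{driftinterpolboundlemma} applied with its parameter ``$\gamma$'' replaced by $\gamma+\beta$ — legitimate since $\gamma+\beta<1-\tfrac2p<1$ and $b^n-b$ is divergence-free — yields, for every $s$ and $r\in(\tfrac{2p}{p+2},\tfrac{2p}{p(\gamma+\beta)+2})$,
\[
\|(b^n-b)\cdummy\nabla u(s)\|_{B^{-1}_{r,2}}\lesssim n^{-\beta}\,\|b\|_{B^{-\gamma}_{p,\infty}}\,\|u(s)\|_{L^\infty\cap H^1};
\]
taking the $L^2_{T'}$ norm in $s$ and inserting the bound on $\|u\|_{L^2_{T'}(L^\infty\cap H^1)}$ gives $\|(b^n-b)\cdummy\nabla u\|_{L^2_TB^{-1}_{r,2}}\lesssim_T n^{-\beta}\|b\|_{B^{-\gamma}_{p,\infty}}\|f\|_{L^2\cap L^\infty}$. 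It then remains to check that the choice $q'm$ lands in the admissible interval: if $q\ge2$ then $q'm=2$, and $2<\tfrac{2p}{p(\gamma+\beta)+2}$ is precisely $\beta<1-\tfrac2p-\gamma$; if $q<2$ then $q'm=q'$, and $q'<\tfrac{2p}{p(\gamma+\beta)+2}$ rearranges to exactly the hypothesis $q>\tfrac{2p}{(2-\beta-\gamma)p-2}$; in both cases $q'm\ge2$ holds. Since moreover $\|\eta\|_{L^q}^{1/m}\le\max\{1,\|\eta\|_{L^q}\}$, taking the supremum over $t=T'\in[0,T]$ finishes the proof.

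The main obstacle I anticipate is the first step: rigorously using the low-regularity backward solution $u$ as a test function simultaneously against the energy solution $X$ (where the martingale property is a priori only available against $C^\infty_c$ functions, extended via $L^2_TH^{-1}$) and against the mollified diffusion $X^n$, and interchanging the limit $\gamma\to0$ with the expectations, for which the integrability supplied by the It\^o trick and the pointwise bound $|u^\gamma(s,X^n_s)|\le\|u\|_{L^\infty_TL^\infty}$ are needed. This is, however, the same circle of arguments as in Lemma \ref{stoppedob} and (\ref{addfunclimit}), with $\tau^\delta$ replaced by the constant $\infty$. Once the weak-error identity is established, the remainder is bookkeeping — reconciling the $L^2_TH^1$ parabolic regularity of $u$, the interpolation Lemma \ref{driftinterpolboundlemma}, and the integrability constraints of the It\^o trick so that the gain $n^{-\beta}$ persists over the stated range of $q$.
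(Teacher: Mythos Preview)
Your argument is correct and uses the same three ingredients as the paper --- the It\^o trick bound (\ref{usefulItotrick}), the drift interpolation Lemma \ref{driftinterpolboundlemma}, and the mollification rate Lemma \ref{mollificationLemma} --- but the route to the weak-error expression is genuinely different. The paper never constructs the backward solution $u$ for the full drift $b$; instead it introduces a second approximation level $m$, works with the Duhamel identity
\[
P^m_t - P^n_t = \int_0^t P^n_s\,(b^m - b^n)\cdummy\nabla P^m_{t-s}\,\mathd s
\]
between the \emph{smooth} semigroups, bounds the right-hand side uniformly in $m$ via (\ref{usefulItotrick}) and Lemma \ref{driftinterpolboundlemma} together with the $m$-uniform estimate $\|P^m f\|_{L^2_TH^1\cap L^\infty_TL^\infty}\le\|f\|_{L^2\cap L^\infty}$, and only then lets $m\to\infty$ using $X^m\to X$ in law. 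This completely sidesteps what you correctly flag as the main obstacle --- plugging a low-regularity $u$ into both the energy solution $X$ and the diffusion $X^n$ and passing $\gamma\to0$ --- since everything before the final $m$-limit lives at the smooth level. Your version is more direct (one limit layer rather than two) and produces the clean identity $\mathbb E[f(X_t)]-\mathbb E[f(X^n_t)]=-\mathbb E\bigl[\int_0^t(b^n-b)\cdummy\nabla u(s,X^n_s)\,\mathd s\bigr]$ explicitly, at the cost of the approximation arguments you outline (which are indeed the unstopped versions of Lemma \ref{stoppedob} and (\ref{addfunclimit})). Your case split $q\ge2$ versus $q<2$ with $q'm\in\{2,q'\}$ matches the paper's choice $\kappa=q\wedge2$, $\kappa'=q'\vee2$ exactly.
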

\begin{remark}
\label{rateremark}
Note that the rate $\beta \in \left[ 0, 1 -\frac{2}{p} - \gamma \right)$ for $p > \frac{2}{1 - \gamma}$ and $b \in B^{- \gamma}_{p, \infty}$ in Theorem \ref{ratewithoutlocalsingularities} gets arbitrarily close to $\beta=1$ for large $p$ and small $\gamma>0$ while the drift is still supercritical in large enough dimensions since the threshold for criticality is $p=\frac{d}{1-\gamma}$.
\end{remark}

\begin{proof}
  \
  
  Let $P^n_t g (x_0) =\mathbb{E}_{x_0} [g (X_t^n)] $, where $X^n$ is the law
  under which $X^n (0) = x_0$, be the strongly continuous semigroup on $C^k_0$ for any $k \in \mathbb{N}$. In particular, we have the ``Duhamel formula'',
  \begin{equation}
    P_t^m - P_t^n = \int_0^t P^n_{t - s} (b^m - b^n) \cdummy \nabla P_s^m
    \mathd s = \int_0^t P^n_s (b^m - b^n) \cdummy \nabla P_{t - s}^m \mathd s
    . \label{Duhamelforrate}
  \end{equation}
  Moreover, we have the $m$-uniform bound \[
  \|P^mf||_{L_T^2H^1\cap L_T^\infty L^\infty}\leqslant \| f \|_{L^2 \cap L^{\infty}},
  \]
  which follows from the Markov property of $X^m$ and a simple PDE energy estimate using that $b$ is divergence-free (see Lemma 21 in {\cite{GraefnerPerkowski24}}).
  With (\ref{usefulItotrick}) we can bound for $\kappa = q \wedge 2
  \in (1, 2]$, $\frac{1}{\kappa} + \frac{1}{\kappa'} = 1$, and wolog $f \in
  \mathcal{S}$,
  \begin{eqnarray*}
    | \mathbb{E} [f (X_t) - f (X_t^n)] | & = & \lim_m | \mathbb{E} [f (X_t^m)
    - f (X_t^n)] |\\
    & = & \lim_m | \langle \eta, (P_t^m - P_t^n) f \rangle |\\
    & = & \lim_m \left| \langle \eta, \int_0^t P^n_s (b^m - b^n) \cdummy
    \nabla P_{t - s}^m f \rangle \mathd s \right|\\
    & = & \lim_m \left| \mathbb{E}_{X^n (0) \sim \eta \mathd \tmop{Leb}}
    \left[ \int_0^t (b^m - b^n) \cdummy \nabla P_{t - s}^m f (X_s^n) \mathd s
    \right] \right|\\
    & \lesssim & \limsup_m \| \eta \|_{L^{\kappa}} \| (b^m - b^n) \cdummy
    \nabla P^m_{t - \cdummy} f \|_{L^2_T B^{- 1}_{\kappa', 2}}\\
    & \lesssim & \limsup_m \| \eta \|_{L^{\kappa}} {{n^{- \beta +
    \varepsilon}} }  \| b \|_{B^{- \gamma}_{p, \infty}} \| P^m_{\cdummy} f
    \|_{L^2_T (H^1 \cap L^{\infty})}\\
    & \lesssim & \| \eta \|_{L^{\kappa}} {{n^{- \beta + \varepsilon}} }  \| b
    \|_{B^{- \gamma}_{p, \infty}} \| f \|_{L^2 \cap L^{\infty}},
  \end{eqnarray*}
  provided $\kappa' \in \left( \frac{2 p}{p + 2}, \frac{2 p}{p (\beta +
  \gamma) + 2} \right)$. Since $\frac{2 p}{p + 2} < 2 \leqslant \kappa'$, this
  holds already if $\kappa > \frac{2 p}{(2 - \beta - \gamma) p - 2} > 2$,
  which is true. Bounding $\| \eta \|_{L^{\kappa}} \lesssim \| \eta \|_{L^1
  \cap L^q} = \max \{ 1, \| \eta \|_{L^q} \}$ by interpolation we obtain
  \[ \sup_{t \in [0, T]}  | \mathbb{E} [f (X_t) - f (X_t^n)] | \lesssim \max
     \{ 1, \| \eta \|_{L^q} \} {{n^{- \beta + \varepsilon}} }  \| b \|_{B^{-
     \gamma}_{p, \infty}} \| f \|_{L^2 \cap L^{\infty}} . \]
  Since the claim is true for any $q, \beta$ satisfying $\beta \in \left[ 0, 1
  - \frac{2}{p} - \gamma \right)$ and $q \in \left( \frac{2 p}{(2 - \beta -
  \gamma) p - 2}, \infty \right]$, we can take $\tilde{\beta} = \beta +
  \varepsilon$ for $\varepsilon$ small enough. The case for general $f$
  follows from simple approximation arguments.
\end{proof}

Arguing with stopping times we can treat the setting of Theorem
\ref{maintheorem}, i.e. drift with local singularities. For the purpose of a
not too complicated presentation, we assume that $\eta = \frac{\mathd
\tmop{law} (X_0)}{\mathd \tmop{Leb}} \in L^{\infty}$ and that for any
$\varepsilon > 0$, we have that $A g_{\varepsilon}$ lies in a certain
supercritical Besov space with fixed parameters instead of the larger class
$\mathcal{A}$. The following result gives a semi-explicit rate which depends
on the explosive behaviour of $A$ around the singularity.

\begin{corollary}
  \label{ratesingularitycorollary}Let $b = \mathbf{b} (A)$ where $A \in L^p
  (\mathbb{R}^d)$ for some $p > 2$. Assume that there exists a compact set $K
  \subset \mathbb{R}^d$ such that for some $\delta > 0$
  \[ \tmop{Leb}_{\mathbb{R}^d} (B_{\varepsilon} (K)) \lesssim \varepsilon^{2 p
     / (p - 2) + \delta}, \varepsilon > 0 . \]
  Assume further that there exist $\gamma \in [0, 1), p \in (2, \infty)$ with
  $p > \frac{2}{1 - \gamma}$ as well as a family
  $(g_{\varepsilon})_{\varepsilon > 0} \subset C^{\infty} (\mathbb{R}^d)$ with
  \[ g_{\varepsilon} (x) = \left\{\begin{array}{ll}
       1, & d (x, K) > \varepsilon / 2,\\
       0, & d (x, K) < \varepsilon / 4,
     \end{array}\right., \]
  such that
  \[ A g_{\varepsilon} \in B^{- \gamma + 1}_{p, \infty}, \varepsilon > 0 . \]
  Let $\mu \ll \tmop{Leb}_{\mathbb{R}^d}$ with $\eta \assign \frac{\mathd
  \mu}{\mathd \tmop{Leb}} \in L^{\infty}$, let $X$ be the unique energy
  solution to (\ref{mainSDE}) with initial distribution $\mu$ and denote by
  $X^n$ the corresponding solutions to the approximated system (\ref{approx}).
  Let $\beta \in \left[ 0, 1 - \frac{2}{p} - \gamma \right)$. Then, for $f \in
  L^2 \cap L^{\infty}$ it holds for all $\kappa > 0$ that
  \[ \sup_{t \in [0, T]} | \mathbb{E} [f (X_t)] - [f (X_t^n)] | \lesssim_T
     \max \{ 1, \| \eta \|_{L^{\infty}} \} {{n^{- \beta}} }  \|
     b^{\varepsilon} \|_{B^{- \gamma}_{p, \infty}} \| f \|_{L^2 \cap
     L^{\infty}} + \varepsilon^{\delta \frac{p - 2}{p} - \kappa} \| f
     \|_{L^{\infty}}, \]
  uniformly in $\varepsilon \in (0, 1)$, where $b^{\varepsilon} = \mathbf{b}
  (g_{\varepsilon} A)$.
\end{corollary}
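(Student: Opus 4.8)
The plan is to interpolate between $X$ and $X^n$ through the energy solution $X^{\varepsilon}$ of the cut-off equation $\mathd Y_t=b^{\varepsilon}(Y_t)\mathd t+\sqrt2\,\mathd B_t$, $Y_0\sim\mu$, and its mollification $X^{\varepsilon,n}$ solving $\mathd Y_t=(\rho^n\ast b^{\varepsilon})(Y_t)\mathd t+\sqrt2\,\mathd B_t$ with the same driving data $(B,X_0)$ as $X^n$. Since $g_{\varepsilon}A\in B^{-\gamma+1}_{p,\infty}$ we have $b^{\varepsilon}\in B^{-\gamma}_{p,\infty}$ with $p>\tfrac{2}{1-\gamma}$, so $X^{\varepsilon}$ is precisely the unique energy solution appearing in Theorem \ref{ratewithoutlocalsingularities}, and that theorem, with drift $b^{\varepsilon}$ and $\eta\in L^{\infty}$ (i.e.\ $q=\infty$, which is admissible for every $\beta<1-\tfrac{2}{p}-\gamma$), already gives
\[ \sup_{t\in[0,T]}\big|\mathbb{E}[f(X_t^{\varepsilon})-f(X_t^{\varepsilon,n})]\big|\lesssim_T\max\{1,\|\eta\|_{L^{\infty}}\}\,n^{-\beta}\,\|b^{\varepsilon}\|_{B^{-\gamma}_{p,\infty}}\,\|f\|_{L^2\cap L^{\infty}}. \]
By the triangle inequality it then suffices to bound $\sup_t|\mathbb{E}[f(X_t)-f(X_t^{\varepsilon})]|$ and $\sup_t|\mathbb{E}[f(X_t^{\varepsilon,n})-f(X_t^n)]|$ by a constant times $\varepsilon^{\delta\frac{p-2}{p}-\kappa}\|f\|_{L^{\infty}}$, uniformly in $n$.

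For the term with $X^{\varepsilon,n}$ and $X^n$ I would use that the mollified fields $\rho^n\ast(g_{\varepsilon}A)$ and $\rho^n\ast A$ coincide on the open set $\{d(\,\cdot\,,K)>\varepsilon/2+1/n\}$, hence so do the smooth drifts $\rho^n\ast b^{\varepsilon}$ and $b^n$. By strong uniqueness for SDEs with smooth drift and the common driving data, $X^{\varepsilon,n}$ and $X^n$ agree up to $\sigma_n:=\inf\{s\le T:d(X^n_s,K)\le\varepsilon/2+1/n\}$, so that $|\mathbb{E}[f(X^{\varepsilon,n}_t)-f(X^n_t)]|\le2\|f\|_{L^{\infty}}\,\mathbb{P}(\sigma_n\le T)$ and, for $1/n\le\varepsilon/2$, $\mathbb{P}(\sigma_n\le T)\le\mathbb{P}(X^n\text{ enters }\overline{B_{\varepsilon}(K)}\text{ before }T)$ (for the finitely many coarser $n$ one runs the same decomposition with cut-off scale $2/n$). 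The remaining term $\mathbb{E}[f(X_t)-f(X_t^{\varepsilon})]$ is then recovered from this same estimate by letting $n\to\infty$ at fixed $\varepsilon$: $\mathbb{E}[f(X^{\varepsilon,n}_t)]\to\mathbb{E}[f(X^{\varepsilon}_t)]$ by the display above, while $\mathbb{E}[f(X^n_t)]\to\mathbb{E}[f(X_t)]$ because $X^n\Rightarrow X$ and $\tfrac{\mathd\,\tmop{law}(X^n_t)}{\mathd\tmop{Leb}}\le\|\eta\|_{L^{\infty}}$ uniformly, so that the weak-$*$ convergence of these densities may be tested against $f\,1_{B_R}\in L^1$, the tail being absorbed using $\sup_n\mathbb{P}(|X^n_t|>R)\to0$.

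Everything thus reduces to the core estimate: uniformly in $n$,
\[ \mathbb{P}\big(X^n\text{ enters }\overline{B_{\varepsilon}(K)}\text{ before }T\big)\lesssim_{T,\kappa}\varepsilon^{\delta\frac{p-2}{p}-\kappa}, \]
which I would prove by running the argument of Lemma \ref{relationdimHoelder} for the whole sequence $X^n$. The ingredients, all uniform in $n$, are: (i) $\tmop{law}(X^n_s)$ has $\tmop{Leb}$-density $\le\|\eta\|_{L^{\infty}}$ (as $\nabla\cdot b^n\equiv0$), hence $\mathbb{E}\int_0^T 1_B(X^n_s)\,\mathd s\le T\|\eta\|_{L^{\infty}}\tmop{Leb}(B)$; and (ii) $\sup_n\mathbb{E}[\|X^n\|_{C^{\alpha}}^m]<\infty$ for every $\alpha<\tfrac{p-2}{2p}$ and $m\in(\tfrac{2}{1-2\alpha},p]$, which follows from Lemma \ref{ItotricklemmaEuclid} --- whose constant is independent of the drift --- applied to $\int_0^{\cdot}b^{n,i}(X^n_s)\,\mathd s$ after writing $b^{n,i}=\mathLaplace\,\mathLaplace^{-1}\mathbf{b}(\rho^n\ast A)^i$ and bounding $\nabla\mathLaplace^{-1}\mathbf{b}(\rho^n\ast A)^i$ in $L^p$ by $\|A\|_{L^p}$ (Riesz transforms), followed by Kolmogorov's continuity theorem, exactly as in the existence proof for H\"older energy solutions (proof of Theorem~19 in \cite{GraefnerPerkowski24}). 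If $X^n$ enters $\overline{B_{\varepsilon}(K)}$ before $T$, then on $\{\|X^n\|_{C^{\alpha}}\le N\}$ its occupation time of $B_{2\varepsilon}(K)$ within $[0,T]$ is at least $(\varepsilon/N)^{1/\alpha}$; together with (i), Markov's inequality, (ii) and the hypothesis $\tmop{Leb}(B_{2\varepsilon}(K))\lesssim\varepsilon^{2p/(p-2)+\delta}$ this yields $\mathbb{P}(X^n\text{ enters }\overline{B_{\varepsilon}(K)}\text{ before }T)\lesssim N^{-m}+\|\eta\|_{L^{\infty}}T\,N^{1/\alpha}\varepsilon^{\,2p/(p-2)+\delta-1/\alpha}$, and optimizing in $N$ produces the exponent $\tfrac{m\delta'}{m+1/\alpha}$ with $\delta':=2p/(p-2)+\delta-1/\alpha$; letting $m\uparrow p$ and $\alpha\uparrow\tfrac{p-2}{2p}$ (so $\delta'\uparrow\delta$ and $\tfrac{m}{m+1/\alpha}\uparrow\tfrac{p-2}{p}$) gives $\varepsilon^{\delta\frac{p-2}{p}-\kappa}$ for a suitable $(m,\alpha)$ depending on $\kappa$. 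Collecting the three pieces and taking $\sup_{t\in[0,T]}$ finishes the proof.

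The main obstacle is exactly this last step: upgrading the H\"older-regularity-plus-incompressibility argument of Lemma \ref{relationdimHoelder} from the single energy solution $X$ to the entire approximating family $(X^n)_n$ with constants independent of $n$ --- which rests on the drift-independence of the It\^o-trick constant and on $\nabla\cdot b^n\equiv0$ --- and running the $N$-optimization precisely enough to land on the exponent $\delta\tfrac{p-2}{p}$ up to the arbitrary loss $\kappa$; here $\tfrac{p-2}{p}=\tfrac{m}{m+1/\alpha}$ at the endpoints $m=p$, $1/\alpha=2p/(p-2)$ is exactly what the It\^o trick and Kolmogorov's theorem afford. The $n\to\infty$ passage used for the $X$-versus-$X^{\varepsilon}$ term is routine, modulo the qualitative weak convergence $\mathbb{E}[f(X^n_t)]\to\mathbb{E}[f(X_t)]$ for bounded $L^2$ test functions.
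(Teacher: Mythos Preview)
Your proposal follows essentially the same strategy as the paper: split via the cut-off solution $X^{\varepsilon}$ and its approximation $X^{\varepsilon,n}$, apply Theorem~\ref{ratewithoutlocalsingularities} to the cut-off system, and control the remaining pieces by a quantitative version of the occupation-time argument from Lemma~\ref{relationdimHoelder}, using uniform H\"older moments of the approximations (It\^o trick $+$ Kolmogorov) together with the $L^\infty$-density bound, then optimizing over the H\"older-norm threshold. The optimization and the resulting exponent $\delta\tfrac{p-2}{p}$ are exactly as in the paper.

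The one substantive difference is how you define $X^{\varepsilon,n}$ and couple it to $X^n$. You take drift $\rho^n\ast b^{\varepsilon}$ and use pathwise strong uniqueness with common $(B,X_0)$, which is more elementary than the paper's route; in exchange, the drifts only agree on $\{d(\cdot,K)>\varepsilon/2+1/n\}$, and your parenthetical about ``finitely many coarser $n$'' does not give a bound uniform in $\varepsilon\in(0,1)$: for fixed $n$ and $\varepsilon<2/n$ you only get $(\varepsilon/2+1/n)^{\delta(p-2)/p-\kappa}\sim n^{-\theta}$, which need not be bounded by $\varepsilon^{\delta(p-2)/p-\kappa}+n^{-\beta}\|b^{\varepsilon}\|\cdots$. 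The paper sidesteps this by choosing $b^{\varepsilon,n}=\mathbf{b}(g_{\varepsilon}\,\rho^n\ast A)$ (cut off \emph{after} mollifying), so that $b^n$ and $b^{\varepsilon,n}$ coincide on $\{d(\cdot,K)>\varepsilon/2\}$ for every $n$; the price is that the stopping map $Y\mapsto Y_{\cdot\wedge\tau^{\varepsilon}(Y)}$ is not continuous, so the paper has to approximate it by continuous time-changes $F^N$ to pass the stopped-law identity from the smooth level to the limit, and it ends up estimating four stopping-time probabilities ($X$, $X^n$, $X^{\varepsilon}$, $X^{\varepsilon,n}$) rather than the single one for $X^n$ that your limiting argument requires. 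Either fix --- adopting the paper's definition of $X^{\varepsilon,n}$, or restricting to $\varepsilon\geqslant 2/n$ and checking that the rate from Theorem~\ref{ratewithoutlocalsingularities} also covers the modified approximation --- closes the gap.
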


\begin{proof}
  From the uniqueness of energy solutions in Theorem \ref{maintheorem} it
  follows that
  \[ \tmop{law} ((X_{t \wedge \tau^{\varepsilon} (X)})_{t \geqslant 0}) =
     \tmop{law} ((X^{\varepsilon}_{t \wedge \tau^{\varepsilon}
     (X^{\varepsilon})})_{t \geqslant 0}), \]
  where $X^{\varepsilon}$ is the energy solution corresponding to the drift
  $b^{\varepsilon} = \mathbf{b} (g_{\varepsilon} A)$ and $\tau^{\varepsilon}
  (X)$ is defined as in the setting of Theorem \ref{maintheorem}. Indeed, any
  energy solution is equal in law to the energy solution constructed from
  approximations in Theorem 19 in {\cite{GraefnerPerkowski24}}. We would like to conclude our claim from
  the fact that
  \begin{equation}
    \tmop{law} ((X^n_{t \wedge \tau^{\varepsilon} (X^n)})_{t \geqslant 0}) =
    \tmop{law} ((X^{\varepsilon, n}_{t \wedge \tau^{\varepsilon}
    (X^{\varepsilon, n})})_{t \geqslant 0}), \label{stoppedapproxequality}
  \end{equation}
  where $X^n$ is the solution corresponding to the drift $b^n = \mathbf{b}
  (\rho^n \ast A)$ while $X^{\varepsilon, n}$ corresponds to $b^{\varepsilon,
  n} = \mathbf{b} (g_{\varepsilon} \rho^n \ast A)$. With the arguments from
  the proof of Theorem 19 in {\cite{GraefnerPerkowski24}} it is straightforward to verify that, even though
  the approximation has changed slightly, $X^{\varepsilon, n}$ still converges
  in law on $C ([0, T], \mathbb{R}^d)$ to $X^{\varepsilon}$. The problem now
  is that the map
  \[ F : C ([0, T], \mathbb{R}^d) \rightarrow C ([0, T], \mathbb{R}^d),
     (Y_t)_{t \in [0, T]} \mapsto (Y_{t \wedge \tau^{\varepsilon} (Y)})_{t \in
     [0, T]}, \]
  is not continuous and hence we can not directly conclude
  convergence in law on both sides of (\ref{stoppedapproxequality}). However,
  we can approximate $F$ by continuous functions: Let $(h_N)_{N \in \mathbb{N}} \subset C^1
  (\mathbb{R}_+, \mathbb{R})$ with be such that
  \[ h_N (r) = \left\{\begin{array}{ll}
       1, & r \geqslant \frac{1}{N},\\
       r, & r < \frac{1}{2 N},
     \end{array}\right. . \]
  For $Y \in C ([0, T], \mathbb{R}^d)$ we define
  \[ p^N (t) = \int_0^t 1 \wedge (h_N (\min_{0 \leqslant r \leqslant s} d
     (Y_r, \overline{B_{\varepsilon} (K)}))), \quad t \in [0, T] . \]
  It is straightforward to verify that
  \[ F^N : C ([0, T], \mathbb{R}^d) \rightarrow C ([0, T], \mathbb{R}^d),
     (Y_t)_{t \in [0, T]} \mapsto (Y_{ p^N (t)})_{t \in [0, T]}, \]
  is continuous so that
  \[ \tmop{law} (F^N (X)) = \tmop{law} (F^N (X^{\varepsilon})) . \]
  Moreover, we have that $p^N (t) = t$ for $t \leqslant \sigma^N (Y) \assign
  \inf \left\{ s \in [0, T] : d (Y_s, \overline{B_{\varepsilon} (K)})
  \leqslant \frac{1}{N} \right\}$ and thus $F^N (Y)_t = F (Y)_t$ for $t
  \leqslant \sigma^N (Y)$. Since $\sigma^N (Y) \uparrow \tau^{\varepsilon}$ we
  get that $F^N (Y) \rightarrow F (Y)$ in $C ([0, T], \mathbb{R}^d)$ for any
  $Y$. Thus, we obtain (\ref{stoppedapproxequality}) by taking $N \rightarrow
  \infty$.
  
  \
  
  Let $f \in \mathcal{S}$ for now. Then,
  \[ | \mathbb{E} [f (X_t)] -\mathbb{E} [f (X^{\varepsilon}_{t \wedge
     \tau^{\varepsilon} (X^{\varepsilon})})] | = | \mathbb{E} [f (X_t)]
     -\mathbb{E} [f (X_{t \wedge \tau^{\varepsilon} (X)})] | \leqslant \| f
     \|_{L^{\infty}} \mathbb{P} (\tau^{\varepsilon} (X) < t), \]
  with the same holding for $X^n$ and $X^{\varepsilon, n}$ and therefore,
  \begin{eqnarray*}
    | \mathbb{E} [f (X_t)] - [f (X_t^n)] | & \leqslant & \| f \|_{L^{\infty}}
    \{ \mathbb{P} (\tau^{\varepsilon} (X) < t) +\mathbb{P} (\tau^{\varepsilon}
    (X^n) < t) \}\\
    &  & + | \mathbb{E} [f (X^{\varepsilon}_{t \wedge \tau^{\varepsilon}
    (X^{\varepsilon})})] -\mathbb{E} [f (X^{\varepsilon, n}_{t \wedge
    \tau^{\varepsilon} (X^{\varepsilon, n})})] |\\
    & \leqslant & \| f \|_{L^{\infty}}\{ \mathbb{P} (\tau^{\varepsilon} (X) < t) +\mathbb{P}
    (\tau^{\varepsilon} (X^n) < t)\\
    &  &  ...+\mathbb{P} (\tau^{\varepsilon}
    (X^{\varepsilon}) < t) +\mathbb{P} (\tau^{\varepsilon} (X^{\varepsilon,
    n}) < t) \}\\
    &  & + | \mathbb{E} [f (X^{\varepsilon}_t)] -\mathbb{E} [f
    (X^{\varepsilon, n}_t)] |\\
    & \lesssim & \max \{ 1, \| \eta \|_{L^\infty} \} {{n^{- \beta}} }  \|
    b^{\varepsilon} \|_{B^{- \gamma}_{p, \infty}} \| f \|_{L^2 \cap
    L^{\infty}}\\
    &  & + \| f \|_{L^{\infty}} \{ \mathbb{P} (\tau^{\varepsilon} (X) < t) +\mathbb{P}
    (\tau^{\varepsilon} (X^n) < t)\\
    &&...+\mathbb{P} (\tau^{\varepsilon}
    (X^{\varepsilon}) < t) +\mathbb{P} (\tau^{\varepsilon} (X^{\varepsilon,
    n}) < t) \} .
  \end{eqnarray*}
  Now, we derive a bound for $\mathbb{P} (\tau^{\varepsilon} (X) < t),
  \mathbb{P} (\tau^{\varepsilon} (X^n) < t), \mathbb{P} (\tau^{\varepsilon}
  (X^{\varepsilon}) < t), \mathbb{P} (\tau^{\varepsilon} (X^{\varepsilon, n})
  < t)$ which will be the same for all four terms, whence we focus on the
  first one. For this purpose, we follow the arguments from the proof of Lemma
  \ref{relationdimHoelder}: Let $\omega \in \Omega$, such that
  $\tau^{\varepsilon} (X) (\omega) < \infty$, i.e. for some $t \in [0, T]$, we
  have $X_t  (\omega) \in \overline{B_{\varepsilon} (K)}$. Then, still $X_s 
  (\omega) \in B_{2 \varepsilon} (K)$ if, with $0 < \alpha < \frac{1}{2} -
  \frac{1}{p}$,
  \[ | t - s | < (\| X (\omega) \|_{C^{\alpha}}^{- 1} \varepsilon)^{1 /
     \alpha}, \]
  so that for any $L > 0$ and for $\varepsilon$ small enough
  \begin{eqnarray}
    \mathbb{P} (\tau^{\varepsilon} (X) < \infty) & \leqslant & \mathbb{P}
    (\tmop{Leb} \{ t \in [0, T] : X_t \in B_{2 \varepsilon} (K) \} \geqslant
    (\| X \|_{C^{\alpha}}^{- 1} \varepsilon)^{1 / \alpha}) \nonumber\\
    & \leqslant & \mathbb{P} \left( \tmop{Leb} \{ t \in [0, T] : X_t \in B_{2
    \varepsilon} (K) \} \geqslant \left( \frac{\varepsilon}{L} \right)^{1 /
    \alpha} \right) \nonumber\\
    &  & +\mathbb{P} (\| X \|_{C^{\alpha}} > L) . 
    \label{estimatestoppingtime}
  \end{eqnarray}
  To estimate the first term, we cannot use the general incompressibility
  estimate for $X$ since the constants therein do not provide an explicit
  rate. Instead we argue directly and obtain
  \begin{eqnarray*}
    &&\mathbb{P} \left( \tmop{Leb} \{ t \in [0, T] : X_t \in B_{2 \varepsilon}
    (K) \} \geqslant \left( \frac{\varepsilon}{L} \right)^{1 / \alpha}
    \right)\\
    &=&\mathbb{P} \left( \int_0^T 1_{B_{2 \varepsilon} (K)} (X_s) \mathd s
    \geqslant \left( \frac{\varepsilon}{L} \right)^{1 / \alpha} \right)\\
    &\leqslant & \left( \frac{\varepsilon}{L} \right)^{- 1 / \alpha} \mathbb{E}
    \left[ \int_0^T 1_{B_{2 \varepsilon} (K)} (X_s) \mathd s \right]\\
    &=& \left( \frac{\varepsilon}{L} \right)^{- 1 / \alpha} \int_0^T \langle
    \eta, P_s 1_{B_{2 \varepsilon} (K)} \rangle \mathd s\\
    &\leqslant & \left( \frac{\varepsilon}{L} \right)^{- 1 / \alpha} \| \eta
    \|_{L^{\infty}} \int_0^T \| P_s 1_{B_{2 \varepsilon} (K)} \|_{L^1} \mathd
    s\\
    &\leqslant &\left( \frac{\varepsilon}{L} \right)^{- 1 / \alpha} \| \eta
    \|_{L^{\infty}} \int_0^T \| 1_{B_{2 \varepsilon} (K)} \|_{L^1} \mathd s\\
    &\lesssim_T &\left( \frac{\varepsilon}{L} \right)^{- 1 / \alpha} \tmop{Leb}
    (B_{2 \varepsilon} (K))\\
    &\lesssim  &\varepsilon^{\delta + 2 p / (p - 2) - 1 / \alpha} L^{1 / \alpha}
    .
  \end{eqnarray*}
  For the second term in (\ref{estimatestoppingtime}) we refer to the proof
  of Theorem 19 in {\cite{GraefnerPerkowski24}}, where it was shown that
  \[ \mathbb{E} [| X_t - X_s |^p] \lesssim_{\| \eta \|_{L^{\infty}}, b} | t -
     s |^{p / 2}, \]
  so that we obtain, as already noted in the proof of Theorem
  \ref{maintheorem}, that almost surely $X \in C^{(p - 2) / (2 p) -}$.
  However, to obtain a rate, we need to use a probabilistic estimate for the
  H{\"o}lder norm of $X$ as in Theorem 4.3.2 in {\cite{Stroock10}} which
  yields
  \[ \sup_{\varepsilon, n} \mathbb{E} [\| X^{\varepsilon, n}
     \|_{C^{\alpha}}^p] +\mathbb{E} [\| X^n \|_{C^{\alpha}}^p] +\mathbb{E} [\|
     X^{\varepsilon} \|_{C^{\alpha}}^p] +\mathbb{E} [\| X \|_{C^{\alpha}}^p] <
     \infty, \alpha < \frac{1}{2} - \frac{1}{p} . \]
  (Here we included the bounds for $X^{\varepsilon, n}$ etc. since we need
  them to be uniform, which is true). Thus by Markov's inequality,
  \[ \mathbb{P} (\| X \|_{C^{\alpha}} > L) \lesssim L^{- p} . \]
  In total we obtain
  \begin{eqnarray*}
    | \mathbb{E} [f (X_t)] - [f (X_t^n)] | & \lesssim & \max \{ 1, \| \eta
    \|_{L^\infty} \} {{n^{- \beta}} }  \| b^{\varepsilon} \|_{B^{- \gamma}_{p,
    \infty}} \| f \|_{L^2 \cap L^{\infty}}\\
    &  & + \| f \|_{L^{\infty}} (\varepsilon^{\delta + 2 p / (p - 2) - 1 /
    \alpha} L^{1 / \alpha} + L^{- p}) .
  \end{eqnarray*}
  Let $\alpha$ be large enough such that $\tilde{\delta} = \delta + 2 p / (p -
  2) - 1 / \alpha > 0$. Computing derivatives we see that $L^{1 / \alpha}
  \varepsilon^{\tilde{\delta}} + L^{- p}$ can be minimized as a function of
  $L$ at $L_{\min} (\varepsilon) = \left( \frac{p}{\frac{1}{\alpha}
  \varepsilon^{\tilde{\delta}}} \right)^{\frac{1}{1 / \alpha + p}}$ so that
  \begin{eqnarray*}
    | \mathbb{E} [f (X_t)] - [f (X_t^n)] | & \lesssim & \max \{ 1, \| \eta
    \|_{L^\infty} \} {{n^{- \beta}} }  \| b^{\varepsilon} \|_{B^{- \gamma}_{p,
    \infty}} \| f \|_{L^2 \cap L^{\infty}} + \| f \|_{L^{\infty}}
    \varepsilon^{\frac{\tilde{\delta} \alpha p}{1 + \alpha p}} .
  \end{eqnarray*}
  The claim follows since (theoretically) for $\alpha = \frac{1}{2} -
  \frac{1}{p}$ we have $\frac{\tilde{\delta} \alpha p}{1 + \alpha p} = \delta
  \frac{p - 2}{p}$.
\end{proof}

\section*{Acknowledgments}

The author would like to thank Nicolas Perkowski for fruitful and continued
discussions and is grateful for funding by DFG through EXC 2046, Berlin
Mathematical School, through IRTG 2544 ``Stochastic Analysis in
Interaction'' and through the UKRI Future Leaders Fellowship, 2022 - "Large-scale universal behaviour of Random Interfaces and Stochastic Operators" (G. Cannizzaro).

\section{Appendix}
\begin{proof*}{\textit{Proof of Lemma \ref{mollificationLemma}.}}
  We may wolog assume that $\alpha\leqslant 1$ since for $\alpha>1$, one can just apply the case of $\alpha\leqslant 1$ several times. We have with Young's convolution inequality,
  \[ \| b - b^n \|_{B^{s - \alpha}_{p, q}} = \| (\delta_0 - \rho^n) \ast b
     \|_{B^{s - \alpha}_{p, q}} \lesssim \| b \|_{B^s_{p, q}} \| \delta_0 -
     \rho^n \|_{B^{- \alpha}_{1, \infty}} . \]
  Then one checks that, extending the definition of $\rho^n$ from $n \in
  \mathbb{N}$ to $\mathbb{R}^{> 0}$,
  \begin{eqnarray}
    \| \delta_0 - \rho^n \|_{B^{- \alpha}_{1, \infty}} & = & \sup_{j \geqslant
    -1} 2^{- \alpha j} \| \phi_j - \rho^n \ast \phi_j \|_{L^1} \nonumber\\
    & \lesssim & \sup_{j \geqslant 0} 2^{- \alpha j} \| \phi_0 - \rho^{2^{- j} n}
    \ast \phi_0 \|_{L^1}+  \| \phi_{-1} - \rho^n \ast \phi_{-1} \|_{L^1},  \label{prepidentityrate}
  \end{eqnarray}
  For $2^{j}n^{-1}>1$ we simply bound
  \[
  \| \phi_0 - \rho^{2^{- j} n} \ast \phi_0 \|_{L^1}\lesssim \|\phi_0 \|_{L^1}\lesssim 2^{\alpha j}n^{-\alpha} .
  \]
  For $2^{j}n^{-1}\leqslant 1$, using that $\phi_0$ is a Schwartz function, we have
  \begin{eqnarray*}
  \| \phi_0 - \rho^{2^{- j} n} \ast \phi_0 \|_{L^1}&=&\int \left| \int 2^{-dj} n^d\rho (2^{-j} n
    y) (\phi_0 (x) -\phi_0 (x-y) )  \mathd y \right| \mathd x\\
    &\leqslant & 2^{j\alpha} n^{-\alpha} \int  \sup_{|x-y|\leqslant 1}\frac{|\phi_0 (x)-\phi_0 (y)|}{|x-y|^{\alpha}}\mathd x\\
    &\lesssim & 2^{j\alpha} n^{-\alpha} .
  \end{eqnarray*}
  Arguing in the same way for the $\phi_{-1}$-term, this concludes the proof.
\end{proof*}

\bibliographystyle{alpha}
\bibliography{stopping.bib}

\begin{thebibliography}{Tak85}

\bibitem[Eva98]{Evans98}
Lawrence~C. Evans.
\newblock {\em Partial Differential Equations}, volume~19 of {\em Graduate
  Studies in Mathematics}.
\newblock American Mathematical Society, 1 edition, 1998.

\bibitem[GP24]{GraefnerPerkowski24}
Lukas Gr{\"a}fner and Nicolas Perkowski.
\newblock Weak well-posedness of energy solutions to singular {SDEs} with
  supercritical distributional drift.
\newblock 2024.
\newblock Preprint.

\bibitem[HZ23]{HaoZhang23}
Zimo Hao and Xicheng Zhang.
\newblock {SDES} {WITH} {SUPERCRITICAL} {DISTRIBUTIONAL} {DRIFTS}.
\newblock \url{https://arxiv.org/pdf/2312.11145}, 2023.
\newblock Preprint.

\bibitem[JS03]{Jacod2003}
Jean Jacod and Albert~N. Shiryaev.
\newblock {\em {Limit theorems for stochastic processes}}.
\newblock Springer, 2nd edition, 2003.

\bibitem[Str11]{Stroock10}
Daniel~W. Stroock.
\newblock {\em Probability Theory An Analytic View}.
\newblock Cambridge University Press, 2 edition, 2011.

\bibitem[Tak85]{Takanobu85}
Satoshi Takanobu.
\newblock On the existence and uniqueness of {SDE} describing an n-particle
  system interacting via a singular potential.
\newblock {\em Proceedings of the Japan Academy, Series A, Mathematical
  Sciences}, 61(9):287--290, 1985.

\bibitem[Zui22]{vanZuijlen22}
Willem~van Zuijlen.
\newblock Theory of function spaces.
\newblock \url{https://www.wias-berlin.de/people/vanzuijlen/teaching.html},
  2022.
\newblock Lecture notes.

\end{thebibliography}

\end{document}